\newtheorem{theorem}{Theorem}[section]
\newtheorem{lemma}[theorem]{Lemma}
\newtheorem{proposition}[theorem]{Proposition}
\newtheorem{corollary}[theorem]{Corollary}
\newtheorem{example}[theorem]{Example}
\newcommand{\N}{\mathbb N}
\newcommand{\R}{\mathbb R}
\newcommand{\on}{\operatorname}
\author{Szymon G\l \c ab}
\address{Institute of Mathematics, \L \'od\'z University of Technology,
W\'olcza\'nska 215, 93-005 \L \'od\'z, Poland}
\email {szymon.glab@p.lodz.pl}
\author{Jacek Marchwicki}
\address{Institute of Mathematics, \L \'od\'z University of Technology,
W\'olcza\'nska 215, 93-005 \L \'od\'z, Poland}
\email {marchewajaclaw@gmail.com}
\title[On the set of limit points]{On the set of limit points of conditionally convergent series}
\thanks{The first author has been supported by the National Science Centre Poland Grant no. DEC-2012/07/D/ST1/02087.}
\subjclass[2010]{Primary: 40A05; Secondary: 46B15, 46B20} 
\keywords{sum range, Steinintz Theorem, set of limit points, conditionally convergent series, series in Banach spaces}
\begin{document}

\begin{abstract}
Let $\sum_{n=1}^\infty x_n$ be a conditionally convergent series in a Banach space and let $\tau$ be a permutation of natural numbers. We study the set $\on{LIM}(\sum_{n=1}^\infty x_{\tau(n)})$ of all limit points of a sequence $(\sum_{n=1}^p x_{\tau(n)})_{p=1}^\infty$ of partial sums of a rearranged series $\sum_{n=1}^\infty x_{\tau(n)}$. We give full characterization of limit sets in finite dimensional spaces. Namely, a limit set in $\R^m$ is either compact and connected or it is closed and all its connected components are unbounded. On the other hand each set of one of these types is a limit set of some rearranged conditionally convergent series. Moreover, this characterization does not hold in infinite dimensional spaces. 

We show that if $\sum_{n=1}^\infty x_n$ has the Rearrangement Property and $A$ is a closed subset of the closure of the $\sum_{n=1}^\infty x_n$ sum range and it is $\varepsilon$-chainable for every $\varepsilon>0$, then there is a permutation $\tau$ such that $A=\on{LIM}(\sum_{n=1}^\infty x_{\tau(n)})$. As a byproduct of this observation we obtain that series having the Rearrangement Property have closed sum ranges.
\end{abstract}

\maketitle

\section{Introduction}

Let $\sum_{n=1}^\infty x_n$ be a conditionally convergent series on the real line $\R$. For any $a<b$ one can find a permutation $\sigma\in S_\infty$ of natural numbers such that the sequence $\big(\sum_{n=1}^kx_{\sigma(n)}\big)_{k=1}^\infty$ of partial sums of the rearrangement $\sum_{n=1}^\infty x_{\sigma(n)}$ oscillates between $a$ and $b$. Consequently, $a$ and $b$ are limit points of a sequence of rearranged partial sums  $\big(\sum_{n=1}^kx_{\sigma(n)}\big)_{k=1}^\infty$. Since $\vert x_{\sigma(n)}\vert$ tends to zero, the whole interval $[a,b]$ consists of limit points of $\big(\sum_{n=1}^kx_{\sigma(n)}\big)_{k=1}^\infty$. This simple observation shows that the set of all limit points of a sequence of  rearranged partial sums $\big(\sum_{n=1}^kx_{\sigma(n)}\big)_{k=1}^\infty$ is closed and connected, and for any close connected subset $I$ of real line and any conditionally convergent series $\sum_{n=1}^\infty x_n$ one can find a rearrangement $\sum_{n=1}^\infty x_{\sigma(n)}$ such that the set of all limit points of its partial sums equals $I$. If the rearranged series $\sum_{n=1}^\infty x_{\sigma(n)}$ converges to $\infty$ or to $-\infty$, then the set of all its limit points is empty. 

The situation becomes more complicated if limit sets of rearrangements of conditionally convergent series are considered in multidimensional Euclidean spaces. One could expect that such limit sets would be connected or even arcwise connected. It turns out that this is not the case. However, some result concerning connectedness  can be proved for multidimensional spaces, see Theorem \ref{CharacterizationOfLimitSets}. 

Now, let $\sum_{n=1}^\infty x_n$ be a conditionally convergent series in the Euclidean space $\R^n$. By Steinitz Theorem the sum range $\on{SR}(\sum_{n=1}^\infty x_n)=\{\sum_{n=1}^\infty x_{\sigma(n)}:\sigma\in S_\infty\}$ of $\sum_{n=1}^\infty x_n$, where $S_\infty$ is a symmetric group of all permutation of natural numbers, is an affine subspace of $\R^n$. Denote by $\on{LIM}(\sum_{n=1}^\infty x_{\sigma(n)})$ the set of all limit points of a sequence of rearranged partial sums $\big(\sum_{n=1}^kx_{\sigma(n)}\big)_{k=1}^\infty$. Such limit sets were studied by Victor Klee in \cite{Klee}, where the author claimed that if $A$ is a limit set $\on{LIM}(\sum_{n=1}^\infty x_{\sigma(n)})$, then for every $\varepsilon>0$ an $\varepsilon$-shell $A(\varepsilon)=\{x:\Vert x-y\Vert<\varepsilon$ for some $y\in A\}$ of $A$ is connected. Our Example \ref{Ex1} shows that this claim is not true. Note that connectedness of $A(\varepsilon)$ means that any two points $a,b\in A$ can be joined by a path $x_0,x_1,\dots,x_k\in A$ such that $x_0=a$, $x_k=b$ and $\Vert x_i-x_{i-1}\Vert<\varepsilon$, and if $A$ has this property, then we say that $A$ is $\varepsilon$-chainable. Klee also proved that if $A\subseteq\on{SR}(\sum_{n=1}^\infty x_n)$ is closed and $\varepsilon$-chainable for every $\varepsilon>0$, then there is $\sigma\in S_\infty$ such that $A=\on{LIM}(\sum_{n=1}^\infty x_{\sigma(n)})$. 

In this article we complete the Klee's result by giving the full characterization of limit sets $\on{LIM}(\sum_{n=1}^\infty x_{\sigma(n)})$ in Euclidean spaces. Namely we prove the following dichotomy (Theorem \ref{CharacterizationOfLimitSets}): the limit set is either compact and connected or any its component is unbounded; moreover, the closure of the limit set in the one-point compactification of $\R^m$ is connected. The proof uses the fact that underlying space has a finite dimension. Moreover, this dichotomy does not hold for all Banach spaces. More precisely, we construct an example of a conditionally convergent series in $c_0$ such that the limit set of some of its rearrangement consists of two points. 

Theorem \ref{CharacterizationOfLimitSets} cannot be reversed in the sense that there is an unbounded, closed set in the one-dimensional  Euclidean space $\R$ whose every component is unbounded but it cannot be a limit set. Namely, consider the union $X:=(-\infty,-1]\cup[1,\infty)$ of two unbounded connected sets. As we have mentioned in the beginning, any limit set on the real line must be connected, and therefore $X$ is not a limit set. However, Theorem \ref{CharacterizationOfLimitSets} can be reversed in higher dimensions. This means that any compact connected set (or even any closed $\varepsilon$-chainable set for every $\varepsilon>0$) in $\R^m$, $m\geq 1$, and any closed set in $\R^m$, $m\geq 2$, whose every component is unbounded are limit sets of some rearrangement of a conditionally convergent series. 

In the last Section we show that if $\sum_{n=1}^\infty x_n$ has the Rearrangement Property and $A\subseteq\overline{\on{SR}(\sum_{n=1}^\infty x_n)}$ is closed and $\varepsilon$-chainable for every $\varepsilon>0$, then there is $\tau\in S_\infty$ such that $A=\on{LIM}(\sum_{n=1}^\infty x_{\tau(n)})$. As a byproduct of this observation we obtain that series having the Rearrangement Property have closed sum ranges.

\section{Counterexample for Klee's claim}

As we have mentioned in the Introduction, Victor Klee in \cite{Klee} claimed that if $A=\on{LIM}(\sum_{n=1}^\infty x_{\sigma(n)})$, then its $\varepsilon$-shell $A(\varepsilon)=\{x:\Vert x-y\Vert<\varepsilon$ for some $y\in A\}$ is connected for every $\varepsilon>0$. It is equivalent to saying that $A$ is $\varepsilon$-chainable for every $\varepsilon>0$.  The author used quite a different notation than the one used by us, but the gap in his argument can be translated into our language as follows. Klee argued that $\on{LIM}(\sum_{n=1}^\infty x_{\sigma(n)})$ cannot intersect two sets $X$ and $Y$ having disjoint $\varepsilon$-shells $X(\varepsilon)$ and $Y(\varepsilon)$; it is supposed to be ''evident''. However, the following example shows that this is simply not true. 

For natural numbers $n<m$ by $[n,m]$ we denote discrete interval $\{n,n+1,n+2,\dots,m\}$ and by $[n,\infty)$ we denote the set $\{n,n+1,\dots\}$. Let $\sum_{n=1}^\infty y_n$ be a conditionally convergent series and let $\sum_{n=1}^\infty x_n$ be its rearrangement, that is there is $\sigma\in S_\infty$ with $x_n=y_{\sigma(n)}$. A partial sums sequence $(s_n)$, $s_n=\sum_{k=1}^nx_k$, will be called a \emph{walk}. Note that $a\in\on{LIM}(\sum_{n=1}^\infty x_n)$ if for every $\varepsilon>0$ the walk $(s_n)$ hits the ball $B(a,\varepsilon)$. If $(s_n)_{n=1}^\infty$ is a sequence in $\R^m$, then we call it a walk, if some rearrangement of a series $\sum_{n=1}^\infty(s_{n+1}-s_n)$ is convergent. 

A sequence $(s_n)$ of elements of set $X$ is called an $X$-walk if\\
(i) the set $\{s_n:n\in\N\}$ is dense in $X$;\\
(ii) there are positive integers $n_1,n_2,\dots$ such that $s_{n_1+i}=s_{n_1-i}$ for $i\in[1,n_1-1]$ and 
$$
s_{\sum_{j=1}^{k}2n_j-1+n_{k+1}-i}=s_{\sum_{j=1}^{k}2n_j-1+n_{k+1}+i}
$$ 
for $k>0$ and $i\in[1,n_{k+1}]$;\\
(iii) $\Vert s_{n+1}-s_n\Vert\to 0$.

\begin{proposition}\label{X-walkProp}
Suppose that $(s_n)$ is an $X$-walk. Then there is a conditionally convergent series $\sum_{n=1}^\infty x_n$ and a permutation $\sigma\in S_\infty$ such that $s_n=\sum_{k=1}^nx_{\sigma(k)}$. Moreover, $\on{LIM}(\sum_{k=1}^\infty x_{\sigma(k)})=X$. 
\end{proposition}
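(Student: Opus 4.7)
The plan is to take the increments $y_1 := s_1$ and $y_k := s_k - s_{k-1}$ for $k \geq 2$ as the terms of the desired series in the order prescribed by the walk. Concretely, I aim to produce a permutation $\tau \in S_\infty$ for which $\sum_{n=1}^\infty y_{\tau(n)}$ converges; setting $x_n := y_{\tau(n)}$ and $\sigma := \tau^{-1}$ then yields $s_n = \sum_{k=1}^n y_k = \sum_{k=1}^n x_{\sigma(k)}$, which is the identification required by the proposition. It will remain to check that the resulting $\sum x_n$ is conditionally convergent and that $\on{LIM}(\sum_k x_{\sigma(k)}) = X$.

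The main algebraic input is the palindromic condition (ii). Put $N_0 := 0$ and $N_k := \sum_{j=1}^k (2n_j - 1)$, so that block $k+1$ occupies positions $N_k+1, \ldots, N_{k+1}$. Subtracting consecutive reflection identities gives
\[
y_{N_k + n_{k+1} + i} \,=\, -\, y_{N_k + n_{k+1} - i + 1}
\]
for each admissible $i$. For $i < n_{k+1}$ this pairs up the interior increments of each block into pairs summing to $0$; for $i = n_{k+1}$ (with $k \geq 1$) it yields the cross-block relation $y_{N_{k+1}+1} = -y_{N_k+1}$. Iterating the latter shows $\|y_{N_k+1}\|$ is independent of $k \geq 1$, and (iii) then forces $y_{N_k+1} = 0$ for every $k \geq 1$. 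Hence $\N \setminus \{1\}$ is the disjoint union of the within-block pairs $\{a,b\}$ with $y_a + y_b = 0$ and the boundary indices $\{N_k+1 : k \geq 1\}$ where $y$ vanishes.

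With this partition I will define $\tau$ by placing $y_1$ first and then, block after block, the zero term $y_{N_k+1}$ together with the paired increments, each pair occupying two consecutive slots. Any partial sum of $\sum y_{\tau(n)}$ is then equal to $s_1$ plus at most one still-unmatched $y_j$, and (iii) guarantees convergence of the whole rearranged series to $s_1$. Provided $X$ contains at least two points (the singleton case is degenerate and must be assumed away), property (i) supplies two distinct limit points of $(s_n)$, so the $\sigma$-rearrangement of $\sum x_n$ diverges; this rules out unconditional convergence, establishing that $\sum x_n$ is conditionally convergent. Finally $\on{LIM}(\sum_k x_{\sigma(k)})$ is the set of limit points of $(s_n)$, which contains $X$ by (i) and is contained in $\overline{X}=X$ since $\{s_n\}\subseteq X$ and limit sets are always closed. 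The step I expect to be the main obstacle is the combinatorial bookkeeping in the middle paragraph: one has to check carefully that the within-block pairs and the boundary zeros exhaust $\N\setminus\{1\}$ with no overlaps, so that the proposed $\tau$ is genuinely a bijection of $\N$.
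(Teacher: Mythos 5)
Your strategy is the same as the paper's: convert the palindromic condition (ii) into a pairing of the increments into cancelling pairs, reorder so that each pair occupies two consecutive slots (an ``alternating'' series), deduce convergence from (iii), and get $\on{LIM}(\sum_k x_{\sigma(k)})=X$ from (i) plus closedness of limit sets. Your last two paragraphs are fine and in places more careful than the paper, which says nothing about the unpaired initial term $s_1$, about why the series is only \emph{conditionally} convergent, or about the degenerate singleton case.

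The gap is exactly where you feared, in the combinatorial bookkeeping, and it comes from a misparsing of the subscript in (ii). You set $N_k=\sum_{j=1}^k(2n_j-1)$, whereas the intended block boundary is $M_k=\big(\sum_{j=1}^k 2n_j\big)-1$: taking $i=n_{k+1}$ in (ii) must give $s_{M_k}=s_{M_{k+1}}=s_1$ for every $k$ (the walk returns to its starting point after each excursion), and this holds for $M_k$ but fails for your $N_k$ once $k\ge 2$, since $M_k-N_k=k-1$. Consequently, from the third block onward the reflection centre you use is shifted by $k-1$, the identities $y_{N_k+n_{k+1}+i}=-y_{N_k+n_{k+1}-i+1}$ are no longer consequences of (ii), and your pairs need not cancel --- so the permutation $\tau$ you build need not produce a convergent series. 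The ``cross-block relation'' and the forced vanishing $y_{N_k+1}=0$ are artefacts of the same shift; nothing in the definition forces any increment to vanish (none vanish in Example \ref{Ex1}, which should have been a warning sign). With the correct centre $c=M_k+n_{k+1}$ the pairs $\{c-i+1,c+i\}$ for $i=1,\dots,n_{k+1}$ tile the interval $[M_k+1,M_{k+1}]$ exactly; together with the index $1$ and the block-one pairs they partition $\N$ with no leftovers and no overlaps, and the remainder of your argument goes through unchanged --- indeed more simply, since the boundary-zero machinery disappears.
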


\begin{proof}
Note that $s_{\sum_{j=1}^k2n_j-1}=s_1$ for every $k$. That means that the $X$-walk $(s_n)$ gets from $s_1$ to $s_{n_1}$ and back, using the same points, to $s_{2n_1-1}=s_1$, then it walks to $s_{2n_1-1+n_2}$ and back to $s_{2n_1+2n_2-1}=s_1$, and so on.  We define $y_n=s_{n+1}-s_n$. Let $p=\sum_{j=1}^{k}2n_j-1+n_{k+1}-i$ and $m=\sum_{j=1}^{k}2n_j-1+n_{k+1}+i$. Then $y_{p}=-y_{m-1}$. Thus the series $\sum_{n=1}^\infty y_n$ can be rearranged into an alternating series $\sum_{n=1}^\infty x_n$, which by (iii) is convergent. Since each element of $(s_n)$ is in a closed set $X$, then $\on{LIM}(\sum_{k=1}^\infty x_{\sigma(k)})\subseteq X$. The opposite inclusion follows from (i). 
\end{proof}

\begin{example} \label{Ex1}
\emph{ At first we define elements $y_{n}$ of a conditionally convergent series $\sum_{n=1}^{\infty}y_{n}\subseteq\R^2$. The sequence $(y_{n})$ will be alternating, that is $y_{2n}=-y_{2n-1}$ for $n\geq 1$. Therefore we will define only the terms $y_{n}$ with an odd index $n$.\\
\emph{Step 1.} First two odd $y_{n}'s$ are $(\frac{1}{2},0),(\frac{1}{2},0)$.\\
\emph{Step 2.} We define next $1\cdot 4+4+1\cdot 4$ odd elements: $(0,\frac{1}{4}),\ldots,(0,\frac{1}{4})$,$(\frac{1}{4},0),\ldots,(\frac{1}{4},0)$,$(0,-\frac{1}{4}),\ldots,(0,-\frac{1}{4})$.\\
\emph{Step k+1.} In this step we define $k\cdot 2^{k+1}+ 2^{k+1}+k\cdot 2^{k+1}$ elements 
$$
\underbrace{(0,\frac{1}{2^{k+1}}),\ldots,(0,\frac{1}{2^{k+1}})}_{k\cdot 2^{k+1}},\underbrace{(\frac{1}{2^{k+1}},0),\ldots,(\frac{1}{2^{k+1}},0)}_{2^{k+1}},\underbrace{(0,-\frac{1}{2^{k+1}}),\ldots,(0,-\frac{1}{2^{k+1}})}_{k\cdot 2^{k+1}}.
$$
Since $(y_{n})$  is alternating and $\lim _{n\to\infty}\Vert y_{n}\Vert =0$, the series $\sum _{n=1}^{\infty}y_{n}$ is convergent. Now we define our walk, that is a rearrangement of $\sum _{n=1}^{\infty}y_{n}$, as follows. First two $x_{1}$ and $x_{2}$ are the elements of $(y_{n})$ defined in Step 1 with odd indexes, $x_{3}, x_{4}$ are corresponding elements of $(y_{n})$  with even indexes. Next $1\cdot 4+4+1\cdot 4$ of $x_{n}'s$ are elements of $(y_{n})$ defined in Step 2 with odd indexes (in the same order we have defined them above) and the next $1\cdot 4+4+1\cdot 4$ of $x_{n}'s$ are corresponding elements of $(y_{n})$ with even indexes taken with reversed order, and so on. On Figure 1 
we present a sequence of partial sums given for $x_n$'s defined in the first three steps of the construction. Note that $\on{LIM}(\sum _{n=1}^{\infty} x_{n})=\{0,1\}\times [0,\infty)$. Thus the set of limit points of the series $\sum _{n=1}^{\infty} x_{n}$ has no connected $\varepsilon$-shell for $\varepsilon<\frac{1}{2}$. }
\end{example}

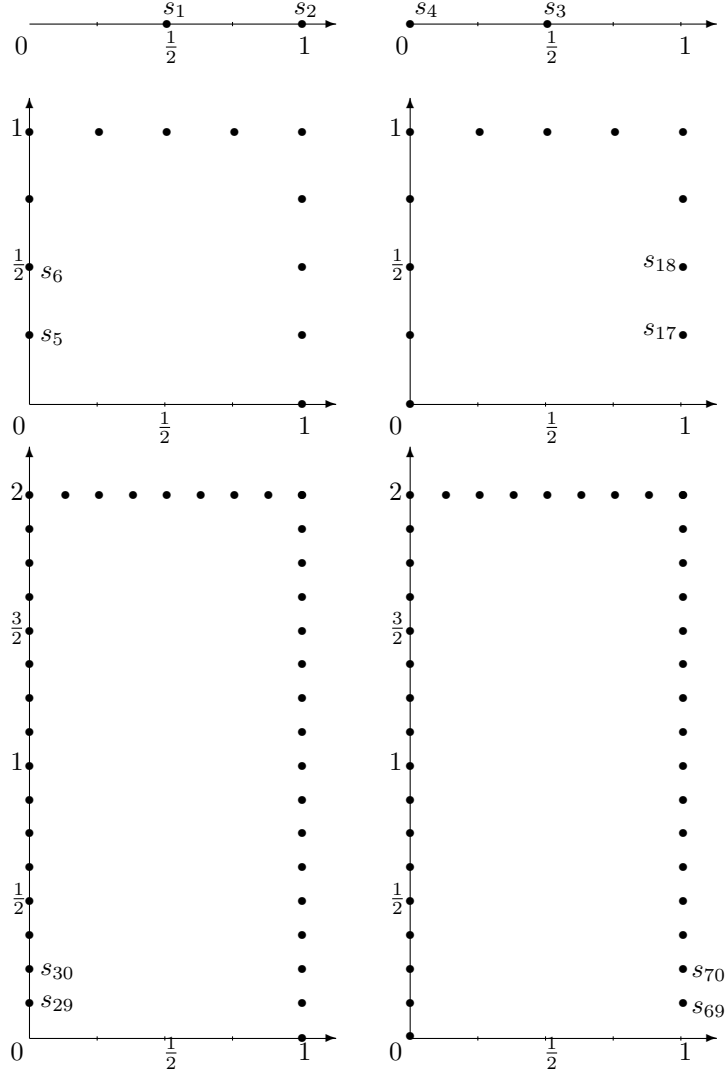
\begin{figure}[h]
\begin{center}
\setlength{\unitlength}{0.8 pt} 
\begin{picture}(400,500)(0,0) 

\put(10,310){\vector(1,0){145}}
\put(10,310){\vector(0,1){145}}

\put(9,438){\line(1,0){2}}
\put(1,437){\mbox{$1$}}
\put(1,373){\mbox{$\frac{1}{2}$}}
\put(9,342){\line(1,0){2}}
\put(9,374){\line(1,0){2}}
\put(9,406){\line(1,0){2}}
\put(2,296){\mbox{$0$}}
\put(138,309){\line(0,1){2}}
\put(137,296){\mbox{$1$}}
\put(70,296){\mbox{$\frac{1}{2}$}}
\put(42,309){\line(0,1){2}}
\put(74,309){\line(0,1){2}}
\put(106,309){\line(0,1){2}}
\put(15,370){\mbox{$s_{6}$}}
\put(15,340){\mbox{$s_{5}$}}
\put(10,343){\circle*{4}}
\put(10,375){\circle*{4}}
\put(10,407){\circle*{4}}
\put(10,439){\circle*{4}}
\put(43,439){\circle*{4}}
\put(75,439){\circle*{4}}
\put(107,439){\circle*{4}}
\put(139,439){\circle*{4}}
\put(139,407){\circle*{4}}
\put(139,375){\circle*{4}}
\put(139,343){\circle*{4}}
\put(139,310){\circle*{4}}

\put(190,310){\vector(1,0){145}}
\put(190,310){\vector(0,1){145}}

\put(189,438){\line(1,0){2}}
\put(180,437){\mbox{$1$}}
\put(180,373){\mbox{$\frac{1}{2}$}}
\put(189,342){\line(1,0){2}}
\put(189,374){\line(1,0){2}}
\put(189,406){\line(1,0){2}}
\put(180,296){\mbox{$0$}}
\put(318,309){\line(0,1){2}}
\put(317,296){\mbox{$1$}}
\put(253,296){\mbox{$\frac{1}{2}$}}
\put(222,309){\line(0,1){2}}
\put(254,309){\line(0,1){2}}
\put(286,309){\line(0,1){2}}
\put(190,343){\circle*{4}}
\put(190,375){\circle*{4}}
\put(190,407){\circle*{4}}
\put(190,439){\circle*{4}}
\put(223,439){\circle*{4}}
\put(255,439){\circle*{4}}
\put(287,439){\circle*{4}}
\put(319,439){\circle*{4}}
\put(319,407){\circle*{4}}
\put(319,375){\circle*{4}}
\put(319,343){\circle*{4}}
\put(190,310){\circle*{4}}
\put(300,343){\mbox{$s_{17}$}}
\put(300,375){\mbox{$s_{18}$}}

\put(10,490){\vector(1,0){145}}

\put(138,489){\line(0,1){2}}
\put(137,476){\mbox{$1$}}
\put(73,476){\mbox{$\frac{1}{2}$}}
\put(3,476){\mbox{$0$}}
\put(42,489){\line(0,1){2}}
\put(74,489){\line(0,1){2}}
\put(106,489){\line(0,1){2}}
\put(73,495){\mbox{$s_{1}$}}
\put(135,495){\mbox{$s_{2}$}}
\put(75,490){\circle*{4}}
\put(139,490){\circle*{4}}

\put(190,490){\vector(1,0){145}}

\put(183,476){\mbox{$0$}}
\put(318,489){\line(0,1){2}}
\put(317,476){\mbox{$1$}}
\put(253,476){\mbox{$\frac{1}{2}$}}
\put(222,489){\line(0,1){2}}
\put(254,489){\line(0,1){2}}
\put(286,489){\line(0,1){2}}
\put(255,490){\circle*{4}}
\put(190,490){\circle*{4}}
\put(253,495){\mbox{$s_{3}$}}
\put(192,495){\mbox{$s_{4}$}}

\put(10,10){\vector(1,0){145}}
\put(10,10){\vector(0,1){280}}

\put(9,138){\line(1,0){2}}
\put(1,137){\mbox{$1$}}
\put(1,73){\mbox{$\frac{1}{2}$}}
\put(0,201){\mbox{$\frac{3}{2}$}}
\put(1,265){\mbox{$2$}}
\put(9,42){\line(1,0){2}}
\put(9,74){\line(1,0){2}}
\put(9,106){\line(1,0){2}}
\put(1,0){\mbox{$0$}}
\put(138,9){\line(0,1){2}}
\put(137,0){\mbox{$1$}}
\put(73,0){\mbox{$\frac{1}{2}$}}
\put(42,9){\line(0,1){2}}
\put(74,9){\line(0,1){2}}
\put(106,9){\line(0,1){2}}
\put(15,25){\mbox{$s_{29}$}}
\put(15,40){\mbox{$s_{30}$}}
\put(10,27){\circle*{4}}
\put(10,43){\circle*{4}}
\put(10,59){\circle*{4}}
\put(10,75){\circle*{4}}
\put(10,91){\circle*{4}}
\put(10,107){\circle*{4}}
\put(10,123){\circle*{4}}
\put(10,139){\circle*{4}}
\put(10,155){\circle*{4}}
\put(10,171){\circle*{4}}
\put(10,187){\circle*{4}}
\put(10,203){\circle*{4}}
\put(10,219){\circle*{4}}
\put(10,235){\circle*{4}}
\put(10,251){\circle*{4}}
\put(10,267){\circle*{4}}
\put(139,27){\circle*{4}}
\put(139,43){\circle*{4}}
\put(139,59){\circle*{4}}
\put(139,75){\circle*{4}}
\put(139,91){\circle*{4}}
\put(139,107){\circle*{4}}
\put(139,123){\circle*{4}}
\put(139,139){\circle*{4}}
\put(139,155){\circle*{4}}
\put(139,171){\circle*{4}}
\put(139,187){\circle*{4}}
\put(139,203){\circle*{4}}
\put(139,219){\circle*{4}}
\put(139,235){\circle*{4}}
\put(139,251){\circle*{4}}
\put(139,267){\circle*{4}}
\put(139,10){\circle*{4}}
\put(27,267){\circle*{4}}
\put(43,267){\circle*{4}}
\put(59,267){\circle*{4}}
\put(75,267){\circle*{4}}
\put(91,267){\circle*{4}}
\put(107,267){\circle*{4}}
\put(123,267){\circle*{4}}
\put(139,267){\circle*{4}}

\put(190,10){\vector(1,0){145}}
\put(190,10){\vector(0,1){280}}

\put(189,138){\line(1,0){2}}
\put(180,137){\mbox{$1$}}
\put(180,73){\mbox{$\frac{1}{2}$}}
\put(180,201){\mbox{$\frac{3}{2}$}}
\put(180,265){\mbox{$2$}}
\put(189,42){\line(1,0){2}}
\put(189,74){\line(1,0){2}}
\put(189,106){\line(1,0){2}}
\put(180,0){\mbox{$0$}}
\put(318,9){\line(0,1){2}}
\put(317,0){\mbox{$1$}}
\put(253,0){\mbox{$\frac{1}{2}$}}
\put(222,9){\line(0,1){2}}
\put(254,9){\line(0,1){2}}
\put(286,9){\line(0,1){2}}
\put(323,22){\mbox{$s_{69}$}}
\put(323,39){\mbox{$s_{70}$}}
\put(190,11){\circle*{4}}
\put(190,27){\circle*{4}}
\put(190,43){\circle*{4}}
\put(190,59){\circle*{4}}
\put(190,75){\circle*{4}}
\put(190,91){\circle*{4}}
\put(190,107){\circle*{4}}
\put(190,123){\circle*{4}}
\put(190,139){\circle*{4}}
\put(190,155){\circle*{4}}
\put(190,171){\circle*{4}}
\put(190,187){\circle*{4}}
\put(190,203){\circle*{4}}
\put(190,219){\circle*{4}}
\put(190,235){\circle*{4}}
\put(190,251){\circle*{4}}
\put(190,267){\circle*{4}}
\put(319,27){\circle*{4}}
\put(319,43){\circle*{4}}
\put(319,59){\circle*{4}}
\put(319,75){\circle*{4}}
\put(319,91){\circle*{4}}
\put(319,107){\circle*{4}}
\put(319,123){\circle*{4}}
\put(319,139){\circle*{4}}
\put(319,155){\circle*{4}}
\put(319,171){\circle*{4}}
\put(319,187){\circle*{4}}
\put(319,203){\circle*{4}}
\put(319,219){\circle*{4}}
\put(319,235){\circle*{4}}
\put(319,251){\circle*{4}}
\put(319,267){\circle*{4}}
\put(207,267){\circle*{4}}
\put(223,267){\circle*{4}}
\put(239,267){\circle*{4}}
\put(255,267){\circle*{4}}
\put(271,267){\circle*{4}}
\put(287,267){\circle*{4}}
\put(303,267){\circle*{4}}
\put(319,267){\circle*{4}}
\end{picture}
\end{center}
\label{rys}
\caption{The first three steps of the construction of the walk $(\sum _{n=1}^{m} x_{n})_{m=1}^\infty$ from Example \ref{Ex1}.}
\end{figure}

\begin{example}\label{Ex2}
\emph{Now we describe a construction in which the limit points of the series are the closure of set of infinitely many pairwise disjoint half-lines $\{a_n:n\in\N\}\times[0,\infty)$ where $(a_{n})$ is a sequence of distinct real numbers. This example is similar to Example \ref{Ex1}, so we prescribe only the walk $(s_n)$. Since in each step of the construction the walk goes from one point to another and then back through the same path, the steps of the walk can be rearranged to an alternating series. Since the lengths of the walk's steps tend to zero, the obtained series is convergent. We describe the first three steps of the construction:\\  
\emph{Step 1.}  We start the walk at $(a_{1},0)$. Then we move to $(a_{2},0)$ along the line $y=0$ using steps of length not greater than $1$. Then we go back to $(a_{1},0)$ via the same path.\\
\emph{Step 2.} We go upward to $(a_{1},1)$,  then along the line $y=1$ to $(a_{2},1)$, next downward to $(a_{2},0)$ and back upward to $(a_{2},1)$, then again along $y=1$ to $(a_{3},1)$ and downward to $(a_{3},0)$ in each part using steps of length not greater than $\frac{1}{2}$. Finally we go back to $(a_{1},0)$ using the same path.\\
\emph{Step 3.} In this step first four points $(a_{1},0),\ldots,(a_{4},0)$ are involved, steps are not greater than $\frac{1}{4}$ and to move between vertical lines $x=a_{i}$ we use a horizontal line $y=2$, etc.}

\emph{ Clearly $\on{LIM}(\sum _{n=1}^{\infty}x_{n})\supseteq \{a_{n}:n\in\N\}\times [0,\infty)$. Since $\on{LIM}(\sum _{n=1}^{\infty}x_{n})$ is closed, we obtain  $\on{LIM}(\sum _{n=1}^{\infty}x_{n})\supseteq\overline{ \{a_{n}:n\in\N\}\times [0,\infty)}=\overline{ \{a_{n}:n\in\N\}}\times [0,\infty)$. To show the inverse inclusion let $(u,v)\notin \overline{ \{a_{n}:n\in\N\}}\times [0,\infty)$. If $v<0$ then $(u,v)\notin\on{LIM}(\sum _{n=1}^{\infty}x_{n})$, because our walk is in $\R^{2}$ and has a non-negative second coordinate. If $v\geq 0$ and $u\notin\overline{ \{a_{n}:n\in\N\}}$ then $\inf _{n\in\mathbb{N}} |u-a_{n}|=\delta>0$. Fix a natural number $m>v+\delta$. Then the ball $B((u,v),\delta)$ does not contain elements of our walk defined in $k$-th step of construction for any $k\geq m$. Hence $(u,v)\notin\on{LIM}(\sum _{n=1}^{\infty}x_{n})$. Finally  $\on{LIM}(\sum _{n=1}^{\infty}x_{n})=\overline{\{a_{n}:n\in\N\}\times [0,\infty)}$.
}
\end{example}

Using Example \ref{Ex2} we can show that the limit set of a rearrangement of a conditionally convergent series can have uncountably many unbounded components. Let $E=\{a_{1},a_{2},\ldots\}$ be a countable dense subset of the ternary Cantor set $C$. By Example \ref{Ex2} one can find a conditionally convergent series $\sum _{n=1}^{\infty} x_{n}$ and a rearrangement $\sigma$ such that $\on{LIM}(\sum _{n=1}^{\infty} x_{\sigma(n)})=\overline{\{a_{n}\}_{n=1}^{\infty}\times [0,\infty)}= C\times [0,\infty)$. Since the ternary Cantor set $C$ is totally disconnected, i.e. each its component is a singleton, half-lines $\{x\}\times[0,\infty)$, $x\in C$, are the components of $C\times[0,\infty)$.

\section{Characterization of limit sets $\on{LIM}(\sum_{n=1}^\infty x_{\sigma(n)})$ in the Euclidean spaces}

Let $B(0,R)=\{v\in\R^m:\Vert v\Vert\leq R\}$ and let $S(0,R)=\{v\in\R^m:\Vert v\Vert=R\}$. For a topological space $X$ by $\mathcal K(X)$ we denote the set of all non-empty compact subsets of $X$ equipped with the Vietoris topology, for details see for example \cite[p. 66]{Sri}. It is well-known that the compactness (metrizability, separability) of $X$ implies the  compactness (metrizability, separability) of the hyperspace $\mathcal K(X)$ and that the family of all nonempty compact connected subsets of $X$ forms a closed subset of $\mathcal K(X)$.

\begin{lemma}\label{LemmaComponentsInBall}
Let $X\subseteq\R^m$ be a closed set and let $R>0$. Then
$$
Z:=\bigcup\{C:C\text{ is a component of }X\cap B(0,R)\text{ such that }C\cap S(0,R)\neq\emptyset\}
$$
is a compact subset of $\R^m$.
\end{lemma}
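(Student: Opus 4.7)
The plan is to prove that $Z$ is closed (it is automatically bounded, being a subset of $B(0,R)$). Take any sequence $(z_n) \subseteq Z$ with $z_n \to z \in \R^m$; I want to show $z \in Z$. Since $X$ and $B(0,R)$ are closed, $z \in X \cap B(0,R)$. For each $n$, let $C_n$ be the component of $X \cap B(0,R)$ with $z_n \in C_n$, and pick a witness $s_n \in C_n \cap S(0,R)$. Note that each $C_n$ is compact (being a closed subset of the compact set $B(0,R)$) and connected.

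Now I would exploit the compactness of the hyperspace. By compactness of $S(0,R)$, after passing to a subsequence I may assume $s_n \to s \in S(0,R)$. Since $\mathcal K(B(0,R))$ is compact in the Vietoris topology and the family of non-empty compact connected subsets forms a closed subset of $\mathcal K(B(0,R))$, a further subsequence gives $C_n \to C$ in Vietoris topology for some compact connected $C \subseteq B(0,R)$.

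Two standard properties of Vietoris (equivalently, Painlev\'e--Kuratowski) convergence of compact sets in a compact metric space then finish the argument. First, if $c \in C$ there exist $c_n \in C_n$ with $c_n \to c$; applied to the hypothesis $C_n \subseteq X$ (with $X$ closed), this yields $C \subseteq X$, and hence $C \subseteq X \cap B(0,R)$. Second, if a sequence $y_n \in C_n$ satisfies $y_n \to y$ then $y \in C$; applied to $z_n \to z$ and $s_n \to s$, this gives $z, s \in C$. Thus $C$ is a connected subset of $X \cap B(0,R)$ containing both $z$ and the point $s \in S(0,R)$, so the component of $X \cap B(0,R)$ through $z$ contains $C$ and therefore meets $S(0,R)$. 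This places $z$ in $Z$.

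The main obstacle is verifying the two Vietoris-convergence facts used above (upper and lower semicontinuity of the limit under inclusion in a closed set, and membership of limits of selected points). These are classical but worth citing carefully, since without them the argument would have to be replayed by an $\varepsilon$-net/subsequence chase. The use of finite dimension enters only through the local compactness that makes $B(0,R)$ and hence $\mathcal K(B(0,R))$ compact; in an infinite-dimensional setting the same skeleton would break down precisely at the Vietoris-compactness step.
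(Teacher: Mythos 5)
Your proposal is correct and follows essentially the same route as the paper's proof: both pass to a convergent subsequence in the compact hyperspace $\mathcal K(\,\cdot\,)$, use that nonempty compact connected sets form a closed subfamily, and conclude that the limit continuum $C$ contains both the limit point and a point of $S(0,R)$. The only cosmetic difference is that the paper works directly in $\mathcal K(X\cap B(0,R))$ (so $C\subseteq X\cap B(0,R)$ is automatic), whereas you work in $\mathcal K(B(0,R))$ and verify that inclusion separately.
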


\begin{proof}
Let $(v_n)\subseteq Z$. Find components $C_n$ of $X\cap B(0,R)$ such that $C_n\cap S(0,R)\neq\emptyset$ and $v_n\in C_n$. Pick $x_n\in C_n\cap S(0,R)$. Since $\mathcal K(X\cap B(0,R))$ is compact, we may assume that $C_n$ tends to some $C$, $v_n\to v$ and $x_n\to x$. Then $v,x\in C$ and $C$ is connected. Therefore $v$ and $x$ are in the same component of $X\cap B(0,R)$ which has a non-empty intersection with the sphere $S(0,R)$. Thus $v\in Z$, and consequently $Z$ is compact.
\end{proof}

Let $X\subseteq\R^m$ be a closed set. We define an equivalence relation $E$ on $X$ as follows
$$
xEy\iff x\text{ and }y\text{ belong to the same component of }X.
$$
By $X/E$ we denote the set of all equivalence classes of $E$ and by $q$ we denote the mapping from $X$ to $X/E$ assigning to a point $x\in X$ the equivalence class $[x]_E\in X/E$. On $X/E$ we consider the so-called \emph{quotient topology} consisting of those $U\subseteq X/E$ such that $q^{-1}(U)$ is open in $X$. The set $X/E$ equipped with this topology is called the \emph{quotient space}, and $q:X\to X/E$ is called the \emph{natural quotient mapping}. The following result important for us can be found in \cite{Engelking}.

\begin{theorem}\label{EngelkingQuotient}
For every compact space $X$, the quotient space $X/E$ is compact and zero-dimensional.  
\end{theorem}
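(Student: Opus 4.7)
\medskip

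\noindent\textbf{Proof proposal.} The plan is to verify the three relevant properties of $X/E$ separately: compactness, Hausdorffness (needed to make zero-dimensionality meaningful), and the existence of a basis of clopen neighbourhoods. The classical fact I would invoke is that in a compact Hausdorff space every connected component equals its quasi-component, i.e.\ equals the intersection of all clopen sets containing it.

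Compactness of $X/E$ is immediate: the natural quotient map $q\colon X\to X/E$ is continuous and surjective, so $X/E$ is the continuous image of a compact space. For the Hausdorff property, the key observation is that every clopen subset $U\subseteq X$ is automatically $E$-saturated: if a component $C$ meets $U$, then $C\cap U$ is a nonempty clopen subset of the connected space $C$, whence $C\subseteq U$. Consequently, for any clopen $U\subseteq X$, both $U$ and $X\setminus U$ are unions of equivalence classes, so $q(U)$ and $q(X\setminus U)=X/E\setminus q(U)$ are both open in the quotient topology, and $q(U)$ is a clopen subset of $X/E$. Given two distinct classes $[x]_E\neq [y]_E$ with corresponding components $C_x$ and $C_y$, it suffices to find a clopen $U\subseteq X$ with $C_x\subseteq U$ and $C_y\cap U=\emptyset$; then $q(U)$ and its complement separate $[x]_E$ from $[y]_E$.

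To produce such a clopen $U$, I would use the above-mentioned component/quasi-component coincidence: the component $C_x$ equals the intersection of the family $\mathcal F$ of all clopen subsets of $X$ containing $C_x$. Since $C_y\cap C_x=\emptyset$, the family $\{(X\setminus U)\cap C_y:U\in\mathcal F\}$ of closed subsets of the compact set $C_y$ has empty intersection, so by compactness some finite intersection is empty; the corresponding finite intersection of the $U$'s is a clopen set having the required separating property. The same argument yields zero-dimensionality: given an open $V\subseteq X/E$ with $[x]_E\in V$, the preimage $q^{-1}(V)$ is open in $X$ and contains the component $C_x$, so the closed sets $(X\setminus U)\cap (X\setminus q^{-1}(V))$ for $U\in\mathcal F$ have empty intersection inside the compact set $X\setminus q^{-1}(V)$. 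A finite subfamily suffices, producing a clopen $U'\subseteq q^{-1}(V)$ with $C_x\subseteq U'$; then $q(U')$ is a clopen neighbourhood of $[x]_E$ contained in $V$.

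The main obstacle is really the single nontrivial input—that components in a compact Hausdorff space coincide with quasi-components. I would establish this by showing that the quasi-component $Q_x$ of $x$ (the intersection of all clopen sets containing $x$) is itself connected: otherwise $Q_x=A\sqcup B$ with $A,B$ disjoint closed sets; using normality, separate them by disjoint open sets and then exploit compactness of the collection of clopen sets defining $Q_x$ to refine one of them into a clopen set meeting $A$ but not $B$, contradicting that every clopen set containing $x$ contains all of $Q_x$. Once connectedness of $Q_x$ is in hand, $Q_x$ is a connected set containing $x$, hence $Q_x\subseteq C_x$, while $C_x\subseteq Q_x$ holds trivially because every clopen set meeting the connected $C_x$ contains it.
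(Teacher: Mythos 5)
Your overall route is sound, and note that the paper does not actually prove this statement: it is quoted from Engelking's book, and your argument is essentially the standard textbook proof that the citation hides, resting on the single nontrivial input that in a compact Hausdorff space every component coincides with its quasi-component. Your reduction of both the Hausdorff property and zero-dimensionality of $X/E$ to the existence of separating clopen sets in $X$, via the observation that clopen subsets of $X$ are automatically $E$-saturated, is correct, as is your sketch of the quasi-component lemma itself. (One should read ``compact'' as ``compact Hausdorff'' here, as Engelking does; you rightly flag that normality is what the quasi-component argument consumes, and in the paper's application $X=\overline{X}^\infty$ is a closed subset of $a(\R^m)$, which is metrizable, so this costs nothing.)

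There is one concrete slip, repeated twice, in how you invoke compactness. Writing $\mathcal F$ for the family of clopen sets containing $C_x$, you consider $\{(X\setminus U)\cap C_y:U\in\mathcal F\}$ and claim it has empty intersection. It need not: its intersection equals $\bigl(X\setminus\bigcup_{U\in\mathcal F}U\bigr)\cap C_y$, and emptiness of a finite subintersection would only yield $C_y\subseteq U_1\cup\dots\cup U_n$, which is not the separation you are after. The family you need is $\{U\cap C_y:U\in\mathcal F\}$; its intersection is $C_x\cap C_y=\emptyset$, so compactness gives $U_1\cap\dots\cap U_n\cap C_y=\emptyset$ for some finite subfamily, and $U_1\cap\dots\cap U_n$ is precisely the clopen set containing $C_x$ and missing $C_y$ that you then (correctly) use. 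The same correction applies verbatim in the zero-dimensionality step: take $\{U\cap(X\setminus q^{-1}(V)):U\in\mathcal F\}$, whose intersection is empty because $C_x\subseteq q^{-1}(V)$. With these two complements removed, the proof is complete and matches the argument one finds in Engelking.
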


For $X\subseteq\R^m$ and $\varepsilon>0$ put $X(\varepsilon):=\{y\in\R^m:\Vert x-y\Vert\leq\varepsilon$ for some $x\in X\}$. We will called it an $\varepsilon$-shell or an $\varepsilon$-neighborhood of $X$. 

\begin{lemma}\label{LemBoundedSeparatedComponnent}
Let $\sum_{n=1}^\infty x_n$ be a conditionally convergent series in $\R^m$ and let $\sigma\in S_\infty$. Assume that $Y$ is a nonempty bounded subset of $X:=\on{LIM}(\sum_{n=1}^\infty x_{\sigma(n)})$. If $Y(\varepsilon)$ is disjoint with $X\setminus Y$ for some $\varepsilon>0$, then $X=Y$.
\end{lemma}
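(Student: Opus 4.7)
The plan is to argue by contradiction. Suppose there exists $z\in X\setminus Y$; then by hypothesis $z\notin Y(\varepsilon)$, i.e.\ $\on{dist}(z,Y)>\varepsilon$. The strategy is to produce a new limit point $w$ of the walk $s_n:=\sum_{k=1}^n x_{\sigma(k)}$ satisfying $0<\on{dist}(w,Y)\le \varepsilon$. Such a $w$ would lie in $Y(\varepsilon)\cap(X\setminus Y)$, contradicting the assumed separation.

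To set this up, I would fix $\delta:=\varepsilon/4$ and exploit that $\Vert s_{n+1}-s_n\Vert=\Vert x_{\sigma(n+1)}\Vert\to 0$ (because $x_n\to 0$ and $\sigma$ is a bijection of $\N$) to choose $N$ with $\Vert s_{n+1}-s_n\Vert<\delta$ for every $n\ge N$. Pick any $y_0\in Y$; since both $y_0$ and $z$ are limit points of the walk, there are infinitely many indices $n\ge N$ with $\on{dist}(s_n,Y)<\delta$ (approaching $y_0$) and infinitely many with $\on{dist}(s_n,Y)>\varepsilon$ (approaching $z$). Interleaving, for each ``close'' index I locate a later ``far'' index and take the last index between them at which $\on{dist}(s_{\cdot},Y)\le \delta$; this produces an infinite sequence of transition indices $(k_i)$ with
$$
\on{dist}(s_{k_i},Y)\le\delta\quad\text{and}\quad\on{dist}(s_{k_i+1},Y)>\delta.
$$
The small-step bound gives $\on{dist}(s_{k_i+1},Y)\le \on{dist}(s_{k_i},Y)+\Vert s_{k_i+1}-s_{k_i}\Vert<2\delta=\varepsilon/2$.

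Finally, since $Y$ is bounded, say $Y\subseteq B(0,R)$, the sequence $(s_{k_i+1})$ lies in the bounded set $B(0,R+\varepsilon/2)$, hence by the Bolzano--Weierstrass theorem some subsequence converges to a point $w\in\R^m$. Being a limit point of $(s_n)$, $w\in X$, and passing to the limit in the inequalities above yields $\delta\le\on{dist}(w,Y)\le 2\delta$, so $w\in Y(\varepsilon)\setminus Y$. This contradicts the hypothesis $Y(\varepsilon)\cap(X\setminus Y)=\emptyset$, forcing $X=Y$.

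The only delicate point is choosing $\delta$ small enough (strictly less than $\varepsilon/2$) so that a single step of the walk cannot leap across the ``annulus'' $\{x:\delta<\on{dist}(x,Y)<\varepsilon\}$; once this is arranged, boundedness of $Y$ does the rest by supplying a convergent subsequence of transition points. Boundedness of $Y$ is essential precisely here: without it the sequence $(s_{k_i+1})$ might diverge and one could not extract a limit point in the forbidden annulus.
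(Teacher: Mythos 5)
Your proof is correct and follows essentially the same route as the paper: both arguments trap the walk in a compact annular shell around $Y$ (the paper uses $\overline{Y(\varepsilon)\setminus Y(\varepsilon/2)}$, you use $\{x:\delta\le\on{dist}(x,Y)\le 2\delta\}$ with $\delta=\varepsilon/4$), note that the small steps force the walk to hit this shell infinitely often while crossing between $Y$ and $X\setminus Y$, and then extract by compactness a limit point lying in the shell, which is disjoint from $X$. Your explicit transition-index construction just spells out the step the paper leaves implicit.
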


\begin{proof}
Note that the closure $Z$ of $Y(\varepsilon)\setminus Y(\varepsilon/2)$ is a compact set disjoint with $X$. Suppose that $X\setminus Y\neq\emptyset$. Consider a set $A:=\{\sum_{n=1}^k x_{\sigma(n)}:k\in\N\}\cap Z$ of those partial sums of $\sum_{n=1}^\infty x_{\sigma(n)}$ which meet $Z$. Since all elements of the nonempty sets $Y$ and $X\setminus Y$ are limit points of a rearranged partial sums sequence $\{\sum_{n=1}^kx_{\sigma(n)}\}_{k=1}^\infty$, then the elements of that sequence walk from $Y$ to $X\setminus Y$ and back infinitely many times. Since the lengths of steps $\Vert x_{\sigma(n)}\Vert$ taken during this walk tend to zero, the set $A$ is infinite. By compactness of $Z$ we obtain that $A$ has a limit point, which in turn is in $Z$, but this contradicts the fact that $Z\cap X=\emptyset$. Thus $X\setminus Y=\emptyset$ and consequently $X=Y$. 
\end{proof}

By $a(\R^m)$ denote the one-point compactification of $\R^m$, that is to the underlying set $\R^m$ we add a point $\infty$. Neighborhood base at each $x\in\R^m$ consists of open ball centered in $x$ and  neighborhood base at $\infty$ consists of all sets of the form $(\R^m\setminus C)\cup\{\infty\}$ where $C$ is compact in $\R^m$. For $A\subset a(\R^m)$ by $\overline{A}^\infty$ denote the closure of $A$ in $a(R^m)$. 

\begin{lemma}\label{ClosureIn1PointCompactification}
Let $\{C_i:i\in I\}$ be a family of connected and unbounded subsets of $\R^m$ and let $C:=\bigcup_{i\in I}C_i$. Then\\
(1) $\overline{C}^\infty=\overline{C}\cup\{\infty\}$;\\
(2) $\overline{C}^\infty$ is connected. 
\end{lemma}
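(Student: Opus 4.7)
The plan is to handle the two claims separately, exploiting that the subspace topology on $\R^m$ inherited from $a(\R^m)$ coincides with the usual topology.

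For (1), I would observe that $\R^m$ is an open subspace of $a(\R^m)$ whose subspace topology agrees with the Euclidean topology. Consequently $\overline{C}^\infty \cap \R^m = \overline{C}$, so everything reduces to deciding whether $\infty$ is a limit point of $C$. Given a basic neighborhood $U = (\R^m \setminus K) \cup \{\infty\}$ of $\infty$ with $K \subseteq \R^m$ compact, pick any $i \in I$; since $C_i$ is unbounded it is not contained in the bounded set $K$, and therefore $C \cap (\R^m \setminus K) \supseteq C_i \setminus K \neq \emptyset$. Hence $\infty \in \overline{C}^\infty$, giving $\overline{C}^\infty = \overline{C} \cup \{\infty\}$.

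For (2), the key observation is that for every $i \in I$ the set $C_i$ is connected in $\R^m$ and hence, via the identification of topologies just mentioned, connected in $a(\R^m)$. Applying (1) to the single-element family $\{C_i\}$ (which is legal since $C_i$ is unbounded and connected) gives that the closure of $C_i$ in $a(\R^m)$ equals $\overline{C_i} \cup \{\infty\}$; closures of connected sets are connected, so $\overline{C_i} \cup \{\infty\}$ is connected in $a(\R^m)$. Now set
$$
D := \bigcup_{i \in I} \bigl(\overline{C_i} \cup \{\infty\}\bigr).
$$
Every piece in the union contains the common point $\infty$, so $D$ is connected as a union of connected sets with nonempty common intersection. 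Therefore $\overline{D}^\infty$ is connected.

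Finally I would verify $\overline{D}^\infty = \overline{C}^\infty$. The inclusion $C \subseteq D$ is immediate, and $D \subseteq \overline{C} \cup \{\infty\} = \overline{C}^\infty$ follows because $\overline{C_i} \subseteq \overline{C}$ for every $i$ and $\infty \in \overline{C}^\infty$ by (1). Since $\overline{C}^\infty$ is closed, taking closures gives $\overline{C}^\infty \subseteq \overline{D}^\infty \subseteq \overline{C}^\infty$, so the two closures coincide, and $\overline{C}^\infty$ is connected. The only mildly delicate point, really a bookkeeping one rather than a genuine obstacle, is making sure that when invoking (1) inside the proof of (2) we are not arguing circularly; this is fine because (1) has already been established for arbitrary families of connected unbounded sets, in particular for the singleton $\{C_i\}$.
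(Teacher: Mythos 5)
Your proof is correct. Part (1) is essentially the paper's argument in slightly different packaging: the paper checks directly that $\overline{C}\cup\{\infty\}$ is closed in $a(\R^m)$ and that unboundedness of $C$ puts $\infty$ in the closure, while you use the standard subspace-closure identity $\overline{C}^\infty\cap\R^m=\overline{C}$ and then test basic neighborhoods of $\infty$ against a single unbounded $C_i$; both come to the same thing (and both tacitly assume $I\neq\emptyset$, which is harmless in every application). Part (2) is where you genuinely diverge. The paper shows directly that $A:=C\cup\{\infty\}$ is connected by a separation argument: if $U,V$ disconnect $A$ with $\infty\in U$, then $V$ lies inside a compact set, some $C_i$ meets $V$, and connectedness of $C_i$ forces $C_i\subseteq V$, contradicting unboundedness; it then passes to the closure. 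You instead assemble connectedness from two standard facts: by (1) applied to the singleton family, each $\overline{C_i}^\infty=\overline{C_i}\cup\{\infty\}$ is the closure of a connected set and hence connected, the union $D=\bigcup_i\bigl(\overline{C_i}\cup\{\infty\}\bigr)$ is connected because all pieces share the point $\infty$, and the sandwich $C\subseteq D\subseteq\overline{C}^\infty$ identifies $\overline{D}^\infty$ with $\overline{C}^\infty$. Your route is more modular and avoids the proof by contradiction, at the cost of invoking (1) a second time and the closure-identification step; the paper's route is more self-contained and makes the role of unboundedness (no $C_i$ fits inside a compact set) completely explicit. Either argument is acceptable.
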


\begin{proof} (1) The set $\overline{C}\cup\{\infty\}$ is closed in $a(\R^m)$, since $(\R^m\cup\{\infty\})\setminus(\overline{C}\cup\{\infty\})=\R^m\setminus\overline{C}$ is open in $\R^m$. Thus $\overline{C}^\infty\subseteq\overline{C}\cup\{\infty\}$. Since $C$ is unbounded, then $\infty\in\overline{C}^\infty$, and consequently $\overline{C}\cup\{\infty\}\subseteq\overline{C}^\infty$. 

(2) Note that $\overline{C\cup\{\infty\}}^\infty=\overline{C}\cup\{\infty\}$ -- it follows from (1) and inclusions $C\subseteq C\cup\{\infty\}\subseteq\overline{C}\cup\{\infty\}$. It is enough to show that $A:=C\cup\{\infty\}$ is connected. Suppose to the contrary that there are two nonempty disjoint open sets $U$ and $V$ with $A=(A\cap U)\cup(A\cap V)$ and $\infty\in U$. Put $U':=U\setminus\{\infty\}$. Then $U'$ is open in $\R^m$. There is a compact set $D\subseteq\R^m$ such that $(X\setminus D)\cup\{\infty\}=U$. Then $X\setminus D=U'$ and $V\subseteq D$. Since $V$ is nonempty, there is $i\in I$ with $V\cap C_i\neq\emptyset$. But then $C_i=(V\cap C_i)\cup(U\cap C_i)$ and by the connectedness of $C_i$ we obtain $C_i\subseteq V\subseteq D$ which contradicts the unboundedness of $C_i$. 
\end{proof}

\begin{theorem}\label{CharacterizationOfLimitSets}
Let $\sum_{n=1}^\infty x_n$ be a conditionally convergent series in $\R^m$ and let $\sigma\in S_\infty$ be a permutation of indexes. Then the set $X=\on{LIM}(\sum_{n=1}^\infty x_{\sigma(n)})$ is either compact connected or it is a union (finite, infinite countable or uncountable) of unbounded closed connected sets; in particular $\overline{X}^\infty$ is compact and connected.  
\end{theorem}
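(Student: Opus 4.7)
The plan is to split into two cases based on whether $X := \on{LIM}(\sum_{n=1}^\infty x_{\sigma(n)})$ has a bounded component, using Lemma \ref{LemBoundedSeparatedComponnent} as the main engine throughout. Note first that $X$ is closed, since the limit set of any sequence is closed.

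If $X$ is bounded, hence compact, I would show $X$ is connected by contradiction. A decomposition $X = X_1 \sqcup X_2$ into disjoint nonempty closed sets puts the two pieces at positive distance by compactness, so $X_1(\varepsilon) \cap X_2 = \emptyset$ for small $\varepsilon > 0$; then Lemma \ref{LemBoundedSeparatedComponnent} applied with $Y = X_1$ forces $X = X_1$, contradicting $X_2 \neq \emptyset$.

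Assume now $X$ is unbounded; I would show every component is unbounded, again by contradiction. Let $C$ be a bounded, hence compact, component of $X$, pick $R>0$ with $C \subset B(0, R/2)$, and set $X' := X \cap \overline{B(0, R)}$. Then $X'$ is compact, and $C$ remains a component of $X'$ (any connected subset of $X'$ containing $C$ is connected in $X$, hence equals $C$ by maximality). By Theorem \ref{EngelkingQuotient} the quotient $X'/E$ is compact and zero-dimensional, so $q(C)$ has a clopen neighborhood basis. For each $\delta_n \downarrow 0$ with $\delta_n < R/4$, using that the compact image $q(X' \setminus B(C, \delta_n))$ in $X'/E$ misses $q(C)$, I would produce a clopen $\widetilde U_n \ni q(C)$ with $Y_n := q^{-1}(\widetilde U_n) \subseteq B(C, \delta_n) \cap X'$.

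Each $Y_n$ is nonempty, bounded, and clopen in $X'$. Splitting $X \setminus Y_n = (X' \setminus Y_n) \cup (X \setminus X')$ shows $Y_n$ is at positive distance from $X \setminus Y_n$: both $Y_n$ and $X' \setminus Y_n$ are compact and disjoint (hence at positive distance when the latter is nonempty), while $X \setminus X'$ sits outside $\overline{B(0, R)}$ and $Y_n \subseteq B(0, 3R/4)$, forcing distance at least $R/4$. Lemma \ref{LemBoundedSeparatedComponnent} then yields $X = Y_n \subseteq B(C, \delta_n)$; letting $n \to \infty$ forces $X \subseteq \overline{C} = C$, contradicting unboundedness of $X$. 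For the ``in particular'' clause: in the compact case $\overline{X}^\infty = X$ is already compact connected, and in the unbounded case Lemma \ref{ClosureIn1PointCompactification} applied to the family of components makes $\overline{X}^\infty$ compact and connected. The main obstacle is the unbounded case --- iterating zero-dimensionality to isolate the putative bounded component $C$ by arbitrarily small clopen pieces $Y_n$ while keeping the corridor between $Y_n$ and $X \setminus Y_n$ wide enough to invoke Lemma \ref{LemBoundedSeparatedComponnent}.
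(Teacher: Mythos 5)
Your argument is correct, and it follows the paper's skeleton --- the same dichotomy, with Lemma \ref{LemBoundedSeparatedComponnent} as the engine and Theorem \ref{EngelkingQuotient} supplying clopen separations in a component quotient --- but the unbounded case is implemented differently. The paper applies the quotient construction to $\overline{X}^\infty$ itself; since $X\cap B(0,R)$ need not be saturated under that quotient (a component of $X$ may enter and leave the ball), it must first kill the set $Z$ of components of $X\cap B(0,R)$ meeting the sphere $S(0,R)$, which is where Lemma \ref{LemmaComponentsInBall} (compactness of $Z$) enters, before it can exhibit a saturated clopen $Y\ni C$ inside the ball. You instead quotient the compact slice $X'=X\cap\overline{B(0,R)}$ by \emph{its own} components, check that $C$ is still a component of $X'$, and use zero-dimensionality to trap $C$ in a clopen $Y_n\subseteq B(C,\delta_n)$; the separation of $Y_n$ from $X\setminus Y_n$ then splits into two easy pieces (disjoint compact sets inside $X'$, plus the annulus of width $R/4$ between $B(0,3R/4)$ and the exterior of $\overline{B(0,R)}$). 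This buys you a proof that dispenses with Lemma \ref{LemmaComponentsInBall} and with the two-stage $Z=\emptyset$ argument, at the small cost of the extra verifications that $C$ remains a component of the slice and that $q(X'\setminus B(C,\delta_n))$ avoids $q(C)$ --- both of which you supply. Two minor remarks: a single $Y_1$ already yields $X=Y_1$ bounded and hence the contradiction, so the sequence $\delta_n\downarrow 0$ is unnecessary; and in the bounded case the paper argues with an arbitrary clopen decomposition exactly as you do, so there you coincide.
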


\begin{proof}
Since $X$ is closed in $\R^m$, $\overline{X}^\infty=X$ if $X$ is bounded and $\overline{X}^\infty=X\cup\{\infty\}$ if $X$ is unbounded. On $\overline{X}^\infty$ define an equivalence relation $E$ given by the decomposition of $\overline{X}^\infty$ into components. 

Assume that $C$ is a bounded component of $X$. There is $R>0$ such that $C\subseteq B(0,R)$ and $C\cap S(0,R)=\emptyset$. Suppose that the set $Z:=\bigcup\{C':C'$ is component of $X\cap B(0,R)$ such that $C'\cap S(0,R)\neq\emptyset\}$ is nonempty. Then, by Lemma \ref{LemmaComponentsInBall}, $Z$ is compact in $\R^m$. Put $U:=(\overline{X}^\infty\cap B(0,R))\setminus Z$. Then $U$ is open in $\overline{X}^\infty$ and $U=q^{-1}(q(U))$; therefore $q(U)$ is open in $\overline{X}^\infty/E$. Since $C\in q(U)$ and $\overline{X}^\infty/E$ is zero-dimensional, there is a clopen set $V\subseteq\overline{X}^\infty/E$ with $C\in V\subseteq q(U)$. Since $Z$ and $Y:=q^{-1}(V)$ are compact, there is $\varepsilon>0$ with $Y(\varepsilon)\cap Z=\emptyset$ and $(Y(\varepsilon)\setminus Y)\cap\overline{X}^\infty=\emptyset$; consequently $Y(\varepsilon)\cap X\setminus Y=\emptyset$. By Lemma \ref{LemBoundedSeparatedComponnent} we obtain that $Z\subseteq X\setminus Y=\emptyset$ which gives a contradiction. Thus $Z=\emptyset$. Therefore there are no components of $X$ having nonempty intersection with $S(0,R)$. Thus $X\cap B(0,R)=q^{-1}\big(q(X\cap B(0,R))\big)$ and consequently $q(X\cap B(0,R))$ is open in $\overline{X}^\infty/E$. Since $\overline{X}^\infty/E$ is zero dimensional, there is a clopen $V$ with $C\in V\subseteq q(X\cap B(0,R))$. Thus $Y:=q^{-1}(V)$ is clopen and it contains $C$. There is $\varepsilon>0$ such that $Y(\varepsilon)\subseteq B(0,R)$ which means that $Y(\varepsilon)$ is disjoint with $X\setminus Y$. By Lemma \ref{LemBoundedSeparatedComponnent} we obtain that $X$ is bounded. 

We have already proved that if $C$ is a bounded component of $X$, then $X$ is bounded itself. That means that if $X$ has an unbounded component, then each its component is unbounded and, by Lemma \ref{ClosureIn1PointCompactification}, $\overline{X}^\infty$ is connected -- equivalently $q(\overline{X}^\infty)=[\infty]_E$. Thus $\overline{X}^\infty$ is connected in $a(\R^m)$ if $X$ is unbounded. To finish the proof we need to show that if $X$ is bounded, then it is connected. If not, there would be two disjoint nonempty clopen subsets $Y$ and $X\setminus Y$ of $X$. But then there would be $\varepsilon>0$ with $Y(\varepsilon)\cap (X\setminus Y)=\emptyset$ which by Lemma \ref{LemBoundedSeparatedComponnent} leads to a contradiction. 
\end{proof}

\section{Theorem \ref{CharacterizationOfLimitSets} does not hold in infinitely dimensional spaces}

Now we will define: a conditionally convergent series $\sum _{n=1}^{\infty}y_{n}$ in $c_{0}$ such that $\sum _{n=1}^{\infty} y_{n}=\theta:=(0,0,\ldots)$ and its rearrangement $\sigma$ such that $\on{LIM}(\sum _{n=1}^{\infty} y_{\sigma(n)})$ consists of two points. 
 
\begin{example}\label{Ex3}
\emph{
As in Example \ref{Ex1} and Example \ref{Ex2} the constructed series  $\sum _{n=1}^{\infty} y_{n}$ will be alternating, so we will define only elements with odd indexes.  Let $\{e_{i}:i\in\N\}$ be a standard basis in $c_0$. We define $(y_n)$ inductively:\\ 
\emph{Step 1.} At first we define three odd elements; the first equals $e_2$, the second equals $e_1$, and the third equals $-e_2$;\\
\emph{Step k+1.}  In this step we define $3\cdot 2^{k}$ elements of the series with odd indexes: the first $2^{k}$ of them are equal to $\frac{1}{2^{k}}e_{k+2}$, next $2^{k}$ of $y_{n}'s$ equal $\frac{1}{2^{k}}e_{1}$ and the last $2^{k}$ of them are equal to $-\frac{1}{2^{k}}e_{k+2}$.\\
We define a rearrangement $\sum_{n=1}^\infty x_n$ of $\sum_{n=1}^\infty y_n$ in the similar way as in Example \ref{Ex1}. Namely, $x_{1},x_2,x_{3}$ are consecutive elements of $(y_{n})$ defined in Step 1 with odd indexes, that is $x_{1}=y_{1},x_{2}=y_{3},x_{3}=y_{5}$. Next three of $x_{n}'s$ are the elements of $(y_{n})$  with even indexes corresponding to the odd indexes defined in Step 1 with reversed order, that is $x_{4}=y_{6},x_{5}=y_{4},x_{6}=y_{2}$. In Step $k+1$ the first $3\cdot 2^{k}$ $x_n$'s are consecutive elements of $(y_{n})$ defined in Step $k+1$ with odd indexes, and next $3\cdot 2^{k}$ $x_n$'s are consecutive elements of $(y_{n})$ with even indexes with reversed order. The sequence of partial sums $s_{m}=\sum _{n=1}^{m}x_{n}$ is the following
$$
e_2,e_2+e_1,e_1,e_2+e_1,e_2,\theta,
$$
$$
\frac12 e_3,e_3,e_3+\frac12 e_1,e_3+e_1,\frac12 e_3+e_1,e_1,\frac12 e_3+e_1,e_3+e_1,e_3+\frac12 e_1,e_3,\frac12 e_3,\theta,\dots
$$
$$
\frac{1}{2^k}e_{k+2},\frac{2}{2^k}e_{k+2},\dots,e_{k+2},e_{k+2}+\frac{1}{2^k}e_1,\dots,e_{k+2}+e_1,\frac{2^k-1}{2^k}e_{k+2}+e_1,\dots,e_1,
$$
$$
e_1+\frac{1}{2^k}e_{k+2},\dots,e_1+e_{k+2}, e_{k+2}+\frac{2^k-1}{2^k}e_1,\dots,e_{k+2},\frac{2^k-1}{2^k}e_{k+2},\dots, \theta,\dots
$$
}

\emph{The walk $(s_{m})$ has the following properties:\\
(i) $\theta$ and $e_1$ appear infinitely many times in a partial sums sequence $(s_m)$;\\
(ii) for every natural number $j\geq 2$ there exists $p\in\mathbb{N}$ such that $s_{m}(j)=0$ for every natural $m\geq p$;\\
(iii) the distance between the point $z=(z(1),z(2),\ldots)$ with $z(1)\notin [0,1]$ and the set $\{s_{m}:m\in\mathbb{N}\}$ is positive;\\
(iv) if $s_m(1)\notin\{0,1\}$ then there exists a natural $k\geq 2$ such that $s_m(k)=1$.}

\emph{We claim that $\on{LIM}(\sum _{n=1}^{\infty} x_{n})=\{\theta,e_1\}$. By (i) we get $\theta,e_1\in\on{LIM}(\sum _{n=1}^{\infty} x_{n})$. Conditions (ii)  and (iii) give us the inclusion  $\on{LIM}(\sum _{n=1}^{\infty} x_{n})\subseteq \{(a,0,0,\ldots) : a\in[0,1]\}$. Indeed, since $z=(z(1),z(2),\ldots)\in\on{LIM}(\sum _{n=1}^{\infty} x_{n})$ then by (ii) we get $z(i)=0$ for every $i\geq 2$. Moreover, if $z(1)>1$ or $z(1)<0$ then by (iii) we have $z\notin\on{LIM}(\sum _{n=1}^{\infty} x_{n})$. Now, let $a\in(0,1)$. We will show that $(a,0,0,\ldots)\notin\on{LIM}(\sum _{n=1}^{\infty} x_{n})$. One can find $\varepsilon>0$ such that $(a-\varepsilon,a+\varepsilon)\cap\{0,1\}=\emptyset$. We consider the ball $B((a,0,0,\ldots),\varepsilon)$ in $c_{0}$. If $z\in B((a,0,0,\ldots),\varepsilon)\cap\{s_{m}:m\in\mathbb{N}\}$ then $z(1)\in(a-\varepsilon,a+\varepsilon)$, hence the first coordinate of $z$ is neither $0$ nor $1$. Then by (iv) there exists a natural number $k\geq 2$ such that $z(k)=1$ which contradicts the fact that $z\in B((a,0,0,\ldots),\varepsilon)$. Hence  $B((a,0,0,\ldots),\varepsilon)\cap\{s_{m}:m\in\mathbb{N}\}=\emptyset$, so $(a,0,0,\ldots)\notin\overline{\{s_{m}:m\in\mathbb{N}\}}$. Since $\on{LIM}(\sum _{n=1}^{\infty} x_{n})$ is contained in $\overline{\{s_{m}:m\in\mathbb{N}\}}$, we have $(a,0,0,\ldots)\notin\on{LIM}(\sum _{n=1}^{\infty} x_{n})$. Finally $\on{LIM}(\sum _{n=1}^{\infty} x_{n})=\{\theta,e_1\}$.
}
\end{example}

{\bf Remark.} Roman Witu\l a reminded us that he had found a very similar example of series with two-point limit set, see \cite{WHK}.  

\section{On the reverse of Theorem \ref{CharacterizationOfLimitSets}}

In this Section we will prove that Theorem \ref{CharacterizationOfLimitSets} can be reversed. It means that for any compact and connected subset $X$ of a Euclidean space $\R^m$ there is a conditionally convergent series $\sum_{n=1}^\infty x_n$ and a permutation $\sigma\in S_\infty$ with $X=\on{LIM}(\sum_{n=1}^\infty x_{\sigma(n)})$, and for any closed subset $Y$ of $\R^m$ whose each component is unbounded there is a conditionally convergent series $\sum_{n=1}^\infty y_n$ and permutation $\tau\in S_\infty$ with $Y=\on{LIM}(\sum_{n=1}^\infty y_{\tau(n)})$. This shows that Theorem \ref{CharacterizationOfLimitSets} gives a full characterization of limit sets in finitely dimensional Banach spaces. 

\begin{theorem}\label{EpsilonChainable}
Let $m\in\N$. Assume that $X\subseteq\R^m$ is closed and $\varepsilon$-chainable for every $\varepsilon>0$. Then there is a conditionally convergent series $\sum_{n=1}^\infty x_n$ in $\R^m$ such that $X=\on{LIM}(\sum_{n=1}^\infty x_{\sigma(n)})$ for some $\sigma\in S_\infty$. In particular, the assertion holds if $X$ is compact and connected.
\end{theorem}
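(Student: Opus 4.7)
The plan is to reduce the theorem to Proposition \ref{X-walkProp} by constructing an appropriate $X$-walk $(s_n)$. Before the construction, observe that any compact connected subset of $\R^m$ is $\varepsilon$-chainable for every $\varepsilon > 0$: cover it by open $\varepsilon/2$-balls centered at its points, extract a finite subcover $\{B(p_i,\varepsilon/2)\}_{i=1}^N$ by compactness, and declare $p_i \sim p_j$ when their balls can be linked by a chain of pairwise intersecting balls from the subcover. If there were more than one equivalence class, the corresponding unions of balls would disconnect $X$, contrary to assumption. Hence the general $\varepsilon$-chainable case will imply the claim for compact connected $X$, and it suffices to treat the former.

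Fix a countable dense subset $\{a_k\}_{k=1}^\infty$ of $X$ (separability comes for free from $X \subseteq \R^m$) and put $s_1 := a_1$. For each $k \geq 2$, the $(1/k)$-chainability of $X$ supplies a finite sequence
$$
a_1 = z_0^{(k)},\, z_1^{(k)},\, \ldots,\, z_{m_k}^{(k)} = a_k
$$
in $X$ with $\|z_{i+1}^{(k)} - z_i^{(k)}\| < 1/k$ for $0 \leq i < m_k$. Declare the $k$-th excursion of the walk to be $z_1^{(k)}, z_2^{(k)}, \ldots, z_{m_k}^{(k)}, z_{m_k-1}^{(k)}, \ldots, z_1^{(k)}, z_0^{(k)}$, obtained by walking out along the chain to $a_k$ and retracing it back to $a_1$. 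Setting $n_1 = 1$ (a trivial initial excursion consisting of $s_1$ alone) and $n_k = m_k$ for $k \geq 2$, and concatenating the excursions, produces the sequence $(s_n)_{n=1}^\infty$.

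I claim $(s_n)$ is an $X$-walk. Density in $X$ (condition (i)) holds because each $a_k$ appears as the peak of the $k$-th excursion, and $\{a_k\}$ is dense in $X$. The reflection identities in (ii) are built into the definition of each excursion, with the integers $n_k$ above playing the required role. Condition (iii) is immediate since every step occurring in the $k$-th excursion has length less than $1/k$, so $\|s_{n+1} - s_n\| \to 0$. Proposition \ref{X-walkProp} now produces the desired conditionally convergent series $\sum x_n$ and permutation $\sigma$ with $\on{LIM}(\sum x_{\sigma(n)}) = X$.

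The entire argument is bookkeeping once chainability is in hand; the mild subtlety is aligning the excursion parameters $n_k$ with the chain lengths so that the formulas in (ii) literally hold. In the nontrivial case that $X$ contains at least two points, conditional rather than absolute convergence of $\sum x_n$ is automatic because infinitely many of the $a_k$ are bounded away from $a_1$, which forces $\sum_n \|s_{n+1} - s_n\|$ to diverge.
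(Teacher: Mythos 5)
Your proof is correct and follows essentially the same route as the paper's: you build an $X$-walk by concatenating out-and-back chains (with mesh tending to $0$) from a fixed base point to the terms of a dense sequence, then invoke Proposition \ref{X-walkProp}. The extra details you supply --- the covering argument for $\varepsilon$-chainability of compact connected sets and the explicit verification that the resulting series is conditionally rather than absolutely convergent --- are correct and merely make explicit what the paper leaves implicit.
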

\begin{proof}
Let $(d_{n})$ be dense in $X$. We will construct an $X$-walk. In the first step we find a 1-chain inside $X$ between points $d_{1}$ and $d_{2}$ and denote it $a_{1}=d_{1},a_{2},\ldots,a_{p}=d_{2}$. We define $s_{i}=a_{i}$ for every $i\in\{1,\ldots,p\}$. Then we go back to $d_{1}$ using the same way, which means that $s_{i}=a_{2p-i}$ for $i\in\{p+1,\ldots,2p-1\}$. In the second step let $a_{2p-1}=d_{1},a_{2p},\ldots,a_{2p-1+r}=d_{3}$ be a $2^{-1}$-chain between $d_{1}$ and $d_{3}$. We define $y_{n}$'s in the same way, that means they are the following elements of the next chain, $s_{i}=a_{i}$ for $i\in\{2p-1,\ldots,2p-1+r\}$ and then we go back to $d_{1}$ via the same elements. In the third step we consider a $2^{-2}$-chain between $d_{1}$ and $d_{4}$ and  define the next $s_{n}$'s as before, and so on. By Proposition \ref{X-walkProp}, we obtain the assertion. Finally, note that connected sets are $\varepsilon$-chainable for every $\varepsilon>0$. 
\end{proof}

\begin{theorem}\label{ReverseCharacterization}
Let $m\geq 2$. Assume that $X\subseteq\R^m$ is closed and any component of $X$ is unbounded. Then there is a conditionally convergent series $\sum_{n=1}^\infty x_n$ in $\R^m$ such that $X=\on{LIM}(\sum_{n=1}^\infty x_{\sigma(n)})$ for some $\sigma\in S_\infty$. 
\end{theorem}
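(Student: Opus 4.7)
The plan is to generalize the $X$-walk construction of Proposition \ref{X-walkProp}: we allow the sequence of partial sums to leave $X$ whenever it must jump between distinct components of $X$, but we push every such excursion so far from the origin that it contributes no finite limit point. Fix a sequence $\{d_n\}_{n=1}^\infty$ dense in $X$, and fix scales $\varepsilon_k:=1/k\downarrow 0$ and radii $R_k:=k^2\uparrow\infty$. Every component of $X$ is closed, connected, hence $\varepsilon$-chainable for every $\varepsilon>0$, and, by hypothesis, unbounded, so it contains points of arbitrarily large norm.

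I would build $(s_n)$ as a concatenation of palindromic segments $\Sigma_1,\Sigma_2,\dots$, each starting and ending at $d_1$, so that the recursive symmetry (ii) in the definition of an $X$-walk is automatic. The forward half of $\Sigma_k$ is the concatenation of transitions $T_1^k,\dots,T_{k-1}^k$, where $T_i^k$ runs from $d_i$ to $d_{i+1}$ in steps of norm at most $\varepsilon_k$, and is built as follows. If $d_i$ and $d_{i+1}$ lie in the same component $C$ of $X$, let $T_i^k$ be an $\varepsilon_k$-chain inside $C$. If they lie in distinct components $C$ and $C'$, pick $a\in C$ and $b\in C'$ with $\Vert a\Vert,\Vert b\Vert\geq R_k$, and concatenate an $\varepsilon_k$-chain inside $C$ from $d_i$ to $a$, an $\varepsilon_k$-chain inside $\R^m\setminus B(0,R_k-1)$ from $a$ to $b$, and an $\varepsilon_k$-chain inside $C'$ from $b$ to $d_{i+1}$. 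The backward half of $\Sigma_k$ is the reversal of the forward half.

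To finish, I would verify $\on{LIM}(\sum_{n=1}^\infty x_{\sigma(n)})=X$. Since $\varepsilon_k\to 0$ we have $\Vert s_{n+1}-s_n\Vert\to 0$, and the palindromic structure of the $\Sigma_k$'s lets us rearrange the increments $y_n:=s_{n+1}-s_n$ into an alternating convergent series $\sum x_n$, with an accompanying permutation $\sigma$ recovering $(s_n)$, exactly as in the proof of Proposition \ref{X-walkProp}. Because $d_k$ reappears in every segment $\Sigma_j$ with $j\geq k$, each $d_k$ is a limit point of $(s_n)$; density of $\{d_n\}$ together with closedness of $\on{LIM}$ then gives $X\subseteq\on{LIM}$. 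For the reverse inclusion, fix a subsequence $s_{n_j}\to z$: the points of $(s_n)$ outside $X$ are the detour points, which in segment $k$ all have norm at least $R_k-1$; since segment indices tend to infinity with $n_j$ and $R_k\to\infty$, only finitely many detour points fit in any ball around $z$, so $s_{n_j}\in X$ eventually and $z\in X$ by closedness. The resulting series is conditionally convergent because $X$, being nonempty with every component unbounded, has more than one point, so the partial sums do not converge.

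The main obstacle, and the sole place where the hypothesis $m\geq 2$ enters, is the $\varepsilon_k$-chain in $\R^m\setminus B(0,R_k-1)$ joining the far points $a$ and $b$: for $m\geq 2$ this exterior is path-connected and hence $\varepsilon$-chainable for every $\varepsilon>0$, whereas in $\R$ it is a disjoint union of two rays and such a chain need not exist. A secondary point worth emphasising is that the endpoints $a,b$ of the excursion must themselves be chosen inside the components $C$ and $C'$, which is what permits the preliminary chains within $C$ and $C'$ to reach them and ensures that all non-detour points of the walk belong to $X$.
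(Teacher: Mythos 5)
Your proposal is correct and follows essentially the same route as the paper's proof: a dense sequence $(d_n)$, $\varepsilon_k$-chains inside the (connected, hence $\varepsilon$-chainable) components, detours at distance at least $R_k\to\infty$ to pass between distinct components (the paper routes these detours along large spheres $S(0,R_k)$, you route them through the exteriors of large balls; both are connected precisely because $m\geq 2$), and a palindromic walk structure fed into Proposition \ref{X-walkProp}. The verification of $\on{LIM}(\sum_{n=1}^\infty x_{\sigma(n)})=X$ --- the inclusion $X\subseteq\on{LIM}$ by recurrence of the $d_k$'s and density, and the reverse inclusion because any bounded set meets only finitely many detour points --- is likewise the paper's argument.
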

\begin{proof}
Let $X=\bigcup _{t\in T}A_{t}$, where for every $t\in T$ the set $A_{t}$ is an unbounded component of $X$. Clearly each $A_{t}$ is closed and $\varepsilon$-chainable for every $\varepsilon>0$. Let $(d_{n})$ be dense in $X$.  If $d_i\in A_s$, $d_j\in A_t$, $A_t\cap A_s=\emptyset$, then, by the connectedness and the unboundedness of $A_s$ and $A_t$, there is a sphere $S(0,R)$ which has non-empty intersections with $A_s$ and $A_t$. Let $a_s\in A_s\cap S(0,R)$ and $a_t\in A_t\cap S(0,R)$. By an $\varepsilon$-chain via $S(0,R)$ from $d_i$ to $d_j$ we mean a concatenation of three $\varepsilon$-chains: from $d_i$ to $a_s$ using elements of $A_s$, from $a_s$ to $a_t$ using elements of $S_R$ and from $d_j$ to $a_t$ using elements of $A_t$. If $A_t=A_s$, then by an $\varepsilon$-chain via $S(0,R)$ from $d_i$ to $d_j$ we mean just an $\varepsilon$-chain from $d_i$ to $d_j$ using elements of $A_s$.  Let $R_n$ be a sequence of radii tending to $\infty$ such that $S(0,R_n)$ has a nonempty intersection with each component containing $d_1,\dots,d_{n+1}$. Now let us describe a walk $(s_n)$, which, in general, does not need to be an $X$-walk: The first elements of the walk $(s_n)$ is a $1/2$-chain via $S(0,R_1)$ from $d_1$ to $d_2$, and then back via the same elements. In the $k$-th step of the construction the next elements of $(s_n)$ are elements of a concatenation of $2^{-k}$-chains via $S(0,R_k)$ from $d_i$ to $d_{i+1}$, $i=1,\dots,k$, and then back from $d_{k+1}$ to $d_1$ via the same elements.

Using the same argument as in Proposition \ref{X-walkProp} we can find an alternating series $\sum_{n=1}^\infty x_n$ and $\sigma\in S_\infty$ such that $s_n=\sum_{i=1}^n x_{\sigma(i)}$. Clearly $X\subseteq\on{LIM}(\sum_{n=1}^\infty x_{\sigma(n)})\subseteq X\cup\bigcup_{k=1}^\infty S(0,R_k)$. Since $R_k\to\infty$ and the sequence $(s_n)$ contains at most finitely many elements of $S(0,R_k)\setminus X$, we obtain the reverse inclusion $X\supseteq\on{LIM}(\sum_{n=1}^\infty x_{\sigma(n)})$.
\end{proof}

As we have mentioned in Introduction, the assertion of Theorem \ref{ReverseCharacterization} is not true if $m=1$.

\section{When a limit set is a singleton}

By definition, if $\sum _{n=1}^{\infty}x_{n}=x_{0}$, then $\on{LIM}(\sum _{n=1}^{\infty}x_{n})=\{x_{0}\}$, since every subsequence of the sequence of partial sums is convergent to $x_{0}$. In general the inverse implication does not need to be true which is illustrated, in $c_0$, by Proposition \ref{rozbieznysingleton}. However, in finitely dimensional spaces the above implication can be reversed.   
\begin{theorem}
Let $\sum_{n=1}^\infty x_n$ be a series in $\R^m$ with $x_n\to 0$. If $\on{LIM}(\sum_{n=1}^\infty x_{n})$ is a singleton, then $\sum_{n=1}^\infty x_{n}$ is convergent and $\{\sum_{n=1}^\infty x_{n}\}=\on{LIM}(\sum_{n=1}^\infty x_{n})$.
\end{theorem}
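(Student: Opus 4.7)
Write $s_k=\sum_{n=1}^k x_n$. The hypothesis $\on{LIM}(\sum_{n=1}^\infty x_n)=\{x_0\}$ says that every convergent subsequence of $(s_k)$ has limit $x_0$. The plan is a two-step argument: first show that $(s_k)$ is bounded in $\R^m$, then invoke the elementary fact that a bounded sequence in a finite-dimensional Euclidean space with a unique accumulation point is convergent.

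\textbf{Step 1: Boundedness of $(s_k)$.} Suppose for contradiction that $(s_k)$ is unbounded. Since $x_0$ is a limit point, infinitely many $s_k$ lie in $B(x_0,1)$. Fix $R>\Vert x_0\Vert+1$, so that $B(x_0,1)\subseteq B(0,R)$; by the unboundedness assumption, infinitely many $s_k$ also lie outside $B(0,R+1)$. The hypothesis $x_n\to 0$ yields $\Vert s_{k+1}-s_k\Vert=\Vert x_{k+1}\Vert<1$ for all sufficiently large $k$, so consecutive partial sums are at distance less than $1$. Every transition of the walk from $B(0,R)$ to the exterior of $B(0,R+1)$ must therefore land, at some intermediate step, in the closed annulus $A:=\overline{B(0,R+1)}\setminus B(0,R)$. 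This happens infinitely often. Since $A$ is a compact subset of $\R^m$, Bolzano--Weierstrass gives a convergent subsequence of $(s_k)$ whose limit lies in $A$ and is thus a limit point of the walk distinct from $x_0$ (because $\Vert x_0\Vert<R$), contradicting the uniqueness assumption. This is the same flavor of argument as in Lemma \ref{LemBoundedSeparatedComponnent}.

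\textbf{Step 2: Convergence.} With $(s_k)$ bounded, suppose that $s_k\not\to x_0$. Then there exist $\varepsilon>0$ and an increasing sequence $(k_j)$ with $\Vert s_{k_j}-x_0\Vert\geq\varepsilon$ for every $j$. The subsequence $(s_{k_j})$ is bounded, so a further subsequence converges, by Bolzano--Weierstrass, to some $y\in\R^m$ with $\Vert y-x_0\Vert\geq\varepsilon$. Then $y\in\on{LIM}(\sum_{n=1}^\infty x_n)$ and $y\neq x_0$, a contradiction. Hence $s_k\to x_0$, which means the series converges with sum $x_0$, and trivially $\{\sum_{n=1}^\infty x_n\}=\on{LIM}(\sum_{n=1}^\infty x_n)$.

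\textbf{Main obstacle.} The decisive use of finite dimensionality is in both appeals to Bolzano--Weierstrass; without it, a bounded closed annulus need not be compact and the trapping argument of Step 1 collapses. This is precisely the feature exploited by the $c_0$-example of Proposition \ref{rozbieznysingleton} to which the introductory remark refers.
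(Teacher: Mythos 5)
Your proof is correct and rests on the same key mechanism as the paper's: since the steps $x_n\to 0$, the walk of partial sums must land infinitely often in a compact annulus, and Bolzano--Weierstrass then produces a second limit point, contradicting the singleton hypothesis. The paper carries this out in a single step by centering the annulus at $x_0$ itself, namely $\overline{B}(x_0,\varepsilon)\setminus B(x_0,\varepsilon/2)$, which makes your separate boundedness step unnecessary; apart from that organizational difference the two arguments are essentially identical, including the observation that finite-dimensionality enters exactly through the compactness of the annulus.
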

\begin{proof}
Let $\on{LIM}(\sum _{n=1}^{\infty}x_{n})=\{x_{0}\}$. Suppose that $\sum _{n=1}^{\infty}x_{n}$ does not converge to $x_{0}$, so there exists $\varepsilon>0$ such that for every $k_{0}\in\mathbb{N}$ one can find $l\geq k_{0}$ such that $\Vert \sum _{n=1}^{l}x_{n}-x_{0}\Vert>\varepsilon$. That means that there are infinitely many indexes $p$ such that $\sum _{n=1}^{p}x_{n}\notin\overline{B}(x_{0},\varepsilon)$. On the other hand, since $x_0$ is a limit point of the series $\sum _{n=1}^{\infty}x_{n}$, there exist infinitely many $r\in\mathbb{N}$ such that $\sum _{n=1}^{r}x_{n}\in B(x_{0},\varepsilon/2)$. Hence there are infinitely many elements of a walk $(s_n)$ of partial sums inside the ball $B(x_{0},\varepsilon/2)$ and infinitely many outside the closed ball $\overline{B}(x_{0},\varepsilon)$. Since $x_n\to 0$, there are infinitely many $s_n$'s in $B=\overline{B}(x_{0},\varepsilon)\setminus B(x_{0},\varepsilon/2)$. By the compactness of $B$, it contains a limit point of $(s_n)$ which contradicts that $\on{LIM}(\sum _{n=1}^{\infty}x_{n})$ is a singleton. 
\end{proof}

Note that the assumption $x_n\to 0$ cannot be omitted. To see this consider the series $2^{-1}+2^1-2^1+2^{-2}+2^2-2^2+2^{-3}+2^3-2^3+\dots$. Clearly 1 is its only limit point, but the series is not convergent.

\begin{proposition}\label{rozbieznysingleton}
There is a conditionally convergent series $\sum_{n=1}^\infty x_n$ in $c_0$ and $\sigma\in S_\infty$ such that $\on{LIM}(\sum_{n=1}^\infty x_{\sigma(n)})=\{\theta\}$ but $\sum_{n=1}^\infty x_{\sigma(n)}$ diverges. 
\end{proposition}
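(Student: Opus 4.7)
The plan is to construct an explicit walk $(s_n)$ in $c_0$ whose partial sums return to $\theta$ infinitely often and also visit each standard basis vector $e_k$, yet have no limit point other than $\theta$. Such a construction exploits infinite dimension, since $\|e_j-e_k\|_\infty=1$ for $j\neq k$, and hence the excursion endpoints themselves do not accumulate anywhere.

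Concretely, I would let the $k$-th excursion be the palindromic walk
\[
\theta,\ \tfrac{1}{k}e_k,\ \tfrac{2}{k}e_k,\ \dots,\ \tfrac{k-1}{k}e_k,\ e_k,\ \tfrac{k-1}{k}e_k,\ \dots,\ \tfrac{1}{k}e_k,\ \theta,
\]
and concatenate these for $k=1,2,\dots$ to obtain $(s_n)$. Every consecutive step has norm $1/k$ during the $k$-th excursion, so $\|s_{n+1}-s_n\|\to 0$. The re-pairing argument from the proof of Proposition \ref{X-walkProp} then yields an alternating series $\sum_{n=1}^\infty x_n$ (with $x_{2n}=-x_{2n-1}$) and a permutation $\sigma\in S_\infty$ satisfying $s_n=\sum_{i=1}^n x_{\sigma(i)}$.

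To finish I would verify four points: (1) $\sum x_n$ converges to $\theta$, since its partial sums are either $\theta$ or of the form $\tfrac{1}{k}e_k$, both tending to $\theta$; (2) the series is not absolutely convergent, since the $k$-th excursion contributes $2k$ terms of norm $1/k$ giving total $\infty$; (3) $\sum x_{\sigma(n)}$ diverges, since $(s_n)$ contains both the constant subsequence $(\theta)$ at the excursion ends and the non-convergent subsequence $(e_k)_{k\geq 1}$ at their midpoints; and (4) $\on{LIM}\bigl(\sum x_{\sigma(n)}\bigr)=\{\theta\}$.

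The conceptually delicate step is (4). Given any $z\neq\theta$, pick a coordinate $j_0$ with $z(j_0)\neq 0$; only the finitely many partial sums sitting in the $j_0$-th excursion have $s_n(j_0)\neq 0$, so only finitely many of them can lie within $|z(j_0)|/2$ of $z$, which rules $z$ out. This is exactly the failure of norm compactness of bounded sets in $c_0$ that Theorem \ref{CharacterizationOfLimitSets} prohibits in $\R^m$, where a bounded limit set must be connected and hence cannot be a singleton for a genuinely non-convergent walk.
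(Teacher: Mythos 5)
Your proposal is correct and is essentially the paper's own construction: the paper also builds an alternating series whose rearranged walk makes successive excursions from $\theta$ out to $e_k$ along the $k$-th coordinate axis and back (using $2^{k-1}$ steps of size $2^{-(k-1)}$ instead of your $k$ steps of size $1/k$), and rules out limit points other than $\theta$ by the same observation that each coordinate of the partial sums is eventually zero while the visits to $e_k$ keep the walk from being Cauchy. No substantive difference.
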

\begin{proof}
To define an alternating series $\sum _{n=1}^{\infty}x_{n}$ it suffices to prescribe only the elements with odd indexes:\\ 
\emph{Step 1.} Firstly we define $x_{1}=e_{1}$.\\ 
\emph{Step k.} The next $2^{k-1}$ elements with odd indexes are equal to $\frac{1}{2^{k-1}}e_{k}$.\\
The series $\sum_{n=1}^\infty x_n$ is the following
$$
e_1-e_1+\frac{1}{2}e_2-\frac{1}{2}e_2+\frac{1}{2}e_2-\frac{1}{2}e_2+\frac{1}{4}e_3-\frac{1}{4}e_3+\frac{1}{4}e_3-\frac{1}{4}e_3
+\frac{1}{4}e_3-\frac{1}{4}e_3+\frac{1}{4}e_3-\frac{1}{4}e_3+\dots
$$
Now we define a rearrangement $\sum_{n=1}^\infty x_{\sigma(n)}$. Firstly we use elements with odd indexes defined in Step 1, and then the corresponding elements with even indexes; secondly we use elements with odd indexes defined in Step 2, and then the corresponding elements with even indexes, and so on. The rearranged series $\sum_{n=1}^\infty x_{\sigma(n)}$ has the following form
$$
e_1-e_1+\frac{1}{2}e_2+\frac{1}{2}e_2-\frac{1}{2}e_2-\frac{1}{2}e_2+\frac{1}{4}e_3+\frac{1}{4}e_3+\frac{1}{4}e_3+\frac{1}{4}e_3
-\frac{1}{4}e_3-\frac{1}{4}e_3-\frac{1}{4}e_3-\frac{1}{4}e_3+\dots
$$
Thus the walk $s_n=\sum_{k=1}^nx_{\sigma(k)}$ has the following properties: 
\begin{enumerate}
\item $s_{2^{k+1}-2}=\theta$ for every $k\in\mathbb{N}$;
\item for every natural number $j$ exists $p\in\mathbb{N}$ such that $s_{m}(j)=0$ for every $m\geq p$;
\item $s_{2^{k}+2^{k-1}-2}=e_{k}$ for every $k\in\mathbb{N}$.
\end{enumerate}
From (1) we have the inclusion $\{\theta\}\subseteq\on{LIM}(\sum _{n=1}^{\infty}x_{\sigma(n)})$ and from (2) we get the inverse inclusion. Hence $\on{LIM}(\sum _{n=1}^{\infty}x_{\sigma(n)})=\{\theta\}$. From (1) and (3) we get $\Vert s_{2^{k}+2^{k-1}-2}(k)-s_{2^{k+1}-2}(k)\Vert=\Vert e_{k}\Vert=1$ for every $k\in\mathbb{N}$. That means that the sequence of partial sums of the rearranged series is not a Cauchy sequence, and consequently it diverges.
\end{proof}

\section{Rearrangement property}

Klee proved that if $A\subseteq\on{SR}(\sum_{n=1}^\infty x_n)$ is closed and $\varepsilon$-chainable for every $\varepsilon>0$, then there is $\tau\in S_\infty$ such that $A=\on{LIM}(\sum_{n=1}^\infty x_{\tau(n)})$. This is a strengthening of Theorem \ref{EpsilonChainable} -- to see it take any conditionally convergent series $\sum_{n=1}^\infty x_n$ with $\on{SR}(\sum_{n=1}^\infty x_n)=\R^m$. We show that this fact holds true in every Banach space provided $\sum_{n=1}^\infty x_n$ has the so-called Rearrangement Property. In fact we prove that if $A\subseteq\overline{\on{SR}(\sum_{n=1}^\infty x_n)}$ is closed and $\varepsilon$-chainable for every $\varepsilon>0$, then there is $\tau\in S_\infty$ such that $A=\on{LIM}(\sum_{n=1}^\infty x_{\tau(n)})$. As a byproduct of this observation we obtain that if  $\sum_{n=1}^\infty x_n$ has the Rearrangement Property, then its sum range $\on{SR}(\sum_{n=1}^\infty x_n)$ is closed.

\begin{lemma}\label{przesuniecie}
Let $\sum _{n=1}^{\infty} x_{n}$ be a conditionally convergent series in a Banach space $X$. Then $SR(\sum _{n=1}^{\infty} x_{n})=SR(\sum _{n=k+1}^{\infty} x_{n})+\sum _{n=1}^{k} x_{n}$ for every $k\in\mathbb{N}$.
\end{lemma}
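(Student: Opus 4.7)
The plan is to prove the two inclusions separately, in both cases exploiting the elementary fact that a finite rearrangement of a convergent series does not change its sum.

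For the inclusion $\on{SR}(\sum_{n=1}^\infty x_n)\supseteq \on{SR}(\sum_{n=k+1}^\infty x_n)+\sum_{n=1}^k x_n$, I would take $y=\sum_{n=1}^k x_n+\sum_{n=k+1}^\infty x_{\tau(n)}$ for some permutation $\tau$ of $\{k+1,k+2,\dots\}$, and define $\sigma\in S_\infty$ by $\sigma(i)=i$ for $i\le k$ and $\sigma(i)=\tau(i)$ for $i>k$. Then by construction $\sum_{n=1}^\infty x_{\sigma(n)}=\sum_{n=1}^k x_n+\sum_{n=k+1}^\infty x_{\tau(n)}=y$, so $y\in\on{SR}(\sum_{n=1}^\infty x_n)$.

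For the harder inclusion $\on{SR}(\sum_{n=1}^\infty x_n)\subseteq \on{SR}(\sum_{n=k+1}^\infty x_n)+\sum_{n=1}^k x_n$, I would start with $y=\sum_{n=1}^\infty x_{\sigma(n)}$ for some $\sigma\in S_\infty$ and reorder the initial block. Choose $N$ so large that $\{1,\dots,k\}\subseteq\sigma(\{1,\dots,N\})$, and enumerate $\sigma(\{1,\dots,N\})\setminus\{1,\dots,k\}$ as $j_{k+1},\dots,j_{N}$. Define $\sigma'\in S_\infty$ by $\sigma'(i)=i$ for $i\le k$, $\sigma'(i)=j_i$ for $k<i\le N$, and $\sigma'(i)=\sigma(i)$ for $i>N$. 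Then $\sigma'$ is indeed a permutation of $\mathbb{N}$ because $\sigma'(\{1,\dots,N\})=\sigma(\{1,\dots,N\})$ as sets, and the values on $\{N+1,N+2,\dots\}$ coincide with $\sigma$.

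The key observation is that the partial sums satisfy $\sum_{n=1}^{N}x_{\sigma'(n)}=\sum_{n=1}^{N}x_{\sigma(n)}$ (same summands, rearranged within a finite block), and for every $p>N$ the additional terms are identical, so $\sum_{n=1}^p x_{\sigma'(n)}=\sum_{n=1}^p x_{\sigma(n)}$. Passing to the limit yields $\sum_{n=1}^\infty x_{\sigma'(n)}=y$. Since $\sigma'$ fixes $1,\dots,k$, the tail $\tau:=\sigma'|_{\{k+1,k+2,\dots\}}$ is a permutation of $\{k+1,k+2,\dots\}$, and $\sum_{n=k+1}^\infty x_{\tau(n)}=y-\sum_{n=1}^k x_n\in\on{SR}(\sum_{n=k+1}^\infty x_n)$, giving the desired inclusion.

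No serious obstacle is expected; the only point requiring a little care is the bookkeeping showing that $\sigma'$ really is a bijection of $\mathbb{N}$ and that its partial sums match those of $\sigma$ from index $N$ onward, which is exactly the standard ``finite perturbations do not affect a convergent sum'' principle.
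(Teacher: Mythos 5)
Your proof is correct and follows essentially the same route as the paper: the $\supseteq$ inclusion is handled identically, and the $\subseteq$ inclusion rests on the same ``finite modifications do not change the sum'' principle. The only cosmetic difference is that you first perform a finite initial-block rearrangement moving $x_1,\dots,x_k$ to the front and then truncate, whereas the paper directly deletes the positions $M=\pi^{-1}(\{1,\dots,k\})$ from the rearranged series and re-enumerates the remaining terms in their induced order; both implementations are valid.
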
  

\begin{proof}
"$\supseteq$" Let $k\in\mathbb{N}$ and $x\in SR(\sum _{n=k+1}^{\infty} x_{n})+\sum _{n=1}^{k} x_{n}$. Then there exists a permutation $\sigma:[k+1,\infty)\rightarrow[k+1,\infty)$ such that $x=\sum _{n=k+1}^{\infty} x_{\sigma(n)}+\sum _{n=1}^{k} x_{n}$. Define $\pi(n)=n$ for $n\leq k$ and $\pi(n)=\sigma(n)$ for $n\geq k+1$. Hence $x=\sum _{n=1}^{\infty} x_{\pi(n)}$, so $x\in SR(\sum _{n=1}^{\infty} x_{n})$.

"$\subseteq$" Let $x\in SR(\sum _{n=1}^{\infty} x_{n})$ and $k\in\mathbb{N}$. Then there exists a permutation $\pi:\mathbb{N}\rightarrow\mathbb{N}$ such that $x=\sum _{n=1}^{\infty} x_{\pi(n)}$. Let $M=\pi^{-1}(\{1,\ldots,k\})$. Then
for every $\varepsilon>0$ there exists $m_0\geq\max M$ such that for every $m>m_0$ the following inequality is true: $\Vert x-\sum _{n=1}^{m} x_{\pi(n)}\Vert<\varepsilon$. It means that  $\Vert x-\sum _{n=1}^{k} x_{n}-\sum _{n\in\{1,\ldots,m\}\setminus M} x_{\pi(n)}\Vert<\varepsilon$ for every $m>m_0$. Define a permutation $\sigma:[k+1,\infty)\rightarrow[k+1,\infty)$ as follows $\sigma(k+l)=\pi(n)$ where $n$ is the $l$-th number in the set $\N\setminus M$. Then $x=\sum_{n=k+1}^\infty x_{\sigma(n)}+\sum_{n=1}^kx_n$. Hence $x\in SR(\sum _{n=k+1}^{\infty} x_{n})+\sum _{n=1}^{k} x_{n}$.
\end{proof}

We say that a conditionally convergent series  $\sum _{k=1}^{\infty} x_{k}$ has the Rearrangement Property, or (RP), if for every $\varepsilon>0$ there are: a natural number $N(\varepsilon)$ and a positive real number $\delta(\varepsilon)$ such that the implication
$$
\Vert \sum _{i=1}^{n} y_{i}\Vert<\delta(\varepsilon)\Rightarrow\Big(\max _{j\leq n}\Vert \sum _{i=1}^{j} y_{\sigma(i)}\Vert<\varepsilon\text{ for some permutation }\sigma\in S_{n}\Big)
$$
holds for every finite sequence $(y_{i})_{i=1}^{n}\subseteq(x_{i})_{i=N(\varepsilon)}^{\infty}$. Note that if $\varepsilon>\varepsilon'>0$, then we can find numbers $\delta(\varepsilon),N(\varepsilon)$ and $\delta(\varepsilon'),N(\varepsilon')$ from the definition of (RP) used for $\varepsilon$ and $\varepsilon'$, respectively, such that $\delta(\varepsilon)\geq\delta(\varepsilon')$ and $N(\varepsilon)\leq N(\varepsilon')$. Similarly, having a decreasing sequence $(\varepsilon_n)$ of positive real numbers, we find $\delta(\varepsilon_n),N(\varepsilon_n)$ from the definition of (RP) such that $\delta(\varepsilon_n)\geq\delta(\varepsilon_{n+1})$ and $N(\varepsilon_n)\leq N(\varepsilon_{n+1})$ for every $n\in\N$. 

\begin{lemma}\label{przedluzenie}
Assume that $\sum _{n=1}^{\infty} x_{n}$ is a conditionally convergent series with (RP) in a Banach space $X$. Let $\varepsilon\geq\varepsilon'>0$ and let $\delta(\frac{\varepsilon}{2}),N(\frac{\varepsilon}{2})$ and $\delta(\frac{\varepsilon'}{2}),N(\frac{\varepsilon'}{2})$ be numbers from the definition of (RP) used for $\frac{\varepsilon}{2}$ and $\frac{\varepsilon'}{2}$, respectively. Let $k\in\mathbb{N}$, $a,b\in SR(\sum _{n=1}^{\infty} x_{n})$ with $\Vert a-b\Vert<\min\{\frac{\varepsilon}{12},\frac{1}{3}\cdot\delta(\frac{\varepsilon}{2})\}$, and $\tau:[1,k]\rightarrow\mathbb{N}$ be a partial permutation such that $\Vert\sum _{n=1}^{k} x_{\tau(n)}-a\Vert\leq\min\{\frac{\varepsilon}{12},\frac{1}{3}\cdot\delta(\frac{\varepsilon}{2})\}$ and $\on{rng}\tau\supseteq[1,N(\frac{\varepsilon}{2})]$.
Then there exist $k'>k$ and a partial permutation $\tau':[1,k']\rightarrow\mathbb{N}$ such that the following conditions hold:
\begin{enumerate}
\item $\tau'|_{[1,k]}=\tau$ and $[1,\max \on{rng}\tau]\subseteq \on{rng}{\tau'}$;
\item $\Vert \sum _{n=1}^{p} x_{\tau'(n)}-a\Vert\leq\varepsilon$ for $p\in[k+1,k']$;
\item $\Vert \sum _{n=1}^{k'} x_{\tau'(n)}-b\Vert\leq\min\{\frac{\varepsilon'}{12},\frac{1}{3}\cdot
\delta(\frac{\varepsilon'}{2})\}$;
\item $\on{rng}(\tau')\supseteq[1,N(\frac{\varepsilon'}{2})]$.
\end{enumerate}
\end{lemma}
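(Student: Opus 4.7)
The plan is to exploit the hypothesis $b\in\on{SR}(\sum_{n=1}^\infty x_n)$ to produce a convergent rearrangement of the as yet unused terms, truncate it to an initial segment whose sum approximates $b-S$ (where $S:=\sum_{n=1}^k x_{\tau(n)}$), and then reorder that finite collection via (RP) so the partial sums stay close to $a$. The crucial observation is that the hypothesis $\on{rng}(\tau)\supseteq[1,N(\varepsilon/2)]$ forces every index still available to lie beyond $N(\varepsilon/2)$, so (RP) with parameter $\varepsilon/2$ is applicable to \emph{all} the new indices, including the mandatory ones in $[1,\max\on{rng}(\tau)]\cup[1,N(\varepsilon'/2)]$.

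In detail, set $M:=\on{rng}(\tau)$; the hypothesis gives $[1,N(\varepsilon/2)]\subseteq M$, so $\N\setminus M\subseteq [N(\varepsilon/2),\infty)$. Repeating the argument of Lemma~\ref{przesuniecie} with $M$ in place of $\{1,\dots,k\}$ produces a bijection $\pi:\N\to\N\setminus M$ with $\sum_{n=1}^\infty x_{\pi(n)}=b-S$. I then pick $L$ so large that $\{\pi(1),\dots,\pi(L)\}\supseteq\bigl([1,\max M]\cup[1,N(\varepsilon'/2)]\bigr)\setminus M$ and
\[
\Big\|\sum_{n=1}^{L}x_{\pi(n)}-(b-S)\Big\|\leq\min\Big\{\tfrac{\varepsilon'}{12},\,\tfrac{1}{3}\delta\bigl(\tfrac{\varepsilon'}{2}\bigr)\Big\}.
\]
Write $I:=\{\pi(1),\dots,\pi(L)\}$ and $T:=\sum_{n\in I}x_n$. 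Using the monotonicity $\delta(\varepsilon'/2)\leq\delta(\varepsilon/2)$ and the two hypotheses $\|a-b\|,\|S-a\|<\tfrac{1}{3}\delta(\varepsilon/2)$, the triangle inequality gives
\[
\|T\|\leq\|T-(b-S)\|+\|b-a\|+\|a-S\|<3\cdot\tfrac{1}{3}\delta\bigl(\tfrac{\varepsilon}{2}\bigr)=\delta\bigl(\tfrac{\varepsilon}{2}\bigr).
\]
Because every index in $I$ exceeds $N(\varepsilon/2)$, (RP) with parameter $\varepsilon/2$ supplies a permutation $\sigma$ of $I$ such that $\|\sum_{i=1}^{j}x_{\sigma(i)}\|<\varepsilon/2$ for every $j\leq L$. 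I then set $k':=k+L$ and extend $\tau$ by $\tau'(k+i):=\sigma(i)$.

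Verification of (1)--(4) is essentially bookkeeping: (1) and (4) are immediate from the inclusions built into the choice of $L$; (3) amounts to $\|S+T-b\|=\|T-(b-S)\|$, which is exactly the bound enforced on the truncation; and (2) follows from
\[
\Big\|\sum_{n=1}^{p}x_{\tau'(n)}-a\Big\|\leq\|S-a\|+\Big\|\sum_{i=1}^{p-k}x_{\sigma(i)}\Big\|<\tfrac{\varepsilon}{12}+\tfrac{\varepsilon}{2}<\varepsilon
\]
for $p\in[k+1,k']$. The only real obstacle is the generalization of Lemma~\ref{przesuniecie} from initial segments $\{1,\dots,k\}$ to an arbitrary finite index set $M$, but this requires only a routine repetition of the original argument (start from any permutation realizing $b$, split it according to $M$ and $\N\setminus M$, and use that $\sum_{n\in M}x_n$ is a finite, order-independent sum). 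All remaining difficulty is the careful partition of the three tolerances $\|S-a\|$, $\|a-b\|$, $\|T-(b-S)\|$, each kept below $\tfrac{1}{3}\delta(\varepsilon/2)$ so that their sum lies under the (RP) threshold.
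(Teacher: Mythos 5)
Your proposal is correct and follows essentially the same route as the paper: both use Lemma~\ref{przesuniecie} to realize $b-S$ as a convergent rearrangement of the unused indices, truncate so as to sweep in the mandatory indices from $[1,\max\on{rng}\tau]\cup[1,N(\frac{\varepsilon'}{2})]$, bound the resulting block by $\delta(\frac{\varepsilon}{2})$ via the same three-term triangle inequality, and then reorder the block by (RP) at level $\frac{\varepsilon}{2}$. The only cosmetic differences are that the paper applies Lemma~\ref{przesuniecie} to the initial segment $[1,k_0]$ with $k_0=\max\{N(\frac{\varepsilon'}{2}),N(\frac{\varepsilon}{2}),\max\on{rng}\tau\}$ (avoiding your generalization to an arbitrary finite set $M$) and includes $\frac{1}{3}\delta(\frac{\varepsilon}{2})$ explicitly in the truncation tolerance instead of invoking the monotone choice $\delta(\frac{\varepsilon'}{2})\leq\delta(\frac{\varepsilon}{2})$.
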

\begin{proof}
Let $k_{0}=\max\{N(\frac{\varepsilon'}{2}),N(\frac{\varepsilon}{2}),\max\on{rng}\tau\}$. Define $y=\sum _{n=1}^{k} x_{\tau(n)}$ and $z=\sum _{n\in\{1,\ldots,k_{0}\}\setminus\{\tau(1),\ldots,\tau(k)\}}x_{n}$. Hence $y+z=\sum _{n=1}^{k_{0}}x_{n}$.
From the assumption that $b\in SR(\sum _{n=1}^{\infty}x_{n})$ by Lemma \ref{przesuniecie} we obtain $b-(y+z)\in SR(\sum _{n=k_{0}+1}^{\infty}x_{n})$. Thus we can find $k_0<n_{1}<n_{2}<\ldots<n_{l}$ such that $y+z+w\in B\big(b,\min\{\frac{\varepsilon'}{12},\frac{1}{3}\cdot \delta(\frac{\varepsilon'}{2}),\frac{1}{3}\cdot\delta(\frac{\varepsilon}{2})\}\big)$, where $w=x_{n_{1}}+\ldots+x_{n_{l}}$.

Enumerate the set $([1,k_{0}]\setminus\{\tau(1),\ldots,\tau(k)\})\cup\{n_{1},\ldots,n_{l}\}$ as $\{m_{1}<m_2<\ldots<m_{k'-k}\}$, where $k'=k_{0}+l$. Hence,
$$
\Big\Vert\sum _{i=1}^{k'-k} x_{m_{i}}\Big\Vert=\Vert z+w\Vert\leq\Vert y-a\Vert+\Vert a-b\Vert +\Vert b-(y+z+w)\Vert.
$$
Consequently, 
$$
\Big\Vert\sum _{i=1}^{k'-k} x_{m_{i}}\Big\Vert\leq \min\Big\{\frac{\varepsilon}{12},\frac{1}{3}\cdot\delta\Big(\frac{\varepsilon}{2}\Big)\Big\}+\min\Big\{\frac{\varepsilon}{12},\frac{1}{3}\cdot\delta\Big(\frac{\varepsilon}{2}\Big)\Big\}+\min\Big\{\frac{\varepsilon'}{12},\frac{1}{3}\cdot\delta\Big(\frac{\varepsilon'}{2}\Big),\frac{1}{3}\cdot\delta\Big(\frac{\varepsilon}{2}\Big)\Big\}\leq\delta\Big(\frac{\varepsilon}{2}\Big).
$$
Since $m_{i}\geq N(\frac{\varepsilon}{2})$ for $i\in[1,k'-k]$ and $\Vert\sum _{i=1}^{k'-k} x_{m_{i}}\Vert\leq\delta(\frac{\varepsilon}{2})$, by (RP) there is a permutation $\sigma\in S_{k'-k}$ such that $\Vert\sum _{i=1}^{j} x_{m_{\sigma(i)}}\Vert\leq\frac{\varepsilon}{2}$ for every $j\in[1,k'-k]$. Let us define $\tau'(n)=\tau(n)$ for $n\leq k$ and $\tau'(n)=m_{\sigma(n-k)}$ for $n\in[k+1,k']$. Then for every $p\in[k+1,k']$ we have the following:
$$
\Big\Vert\sum _{n=1}^{p} x_{\tau'(n)}-a\Big\Vert=\Big\Vert\sum _{n=1}^{k} x_{\tau(n)}+\sum _{n=k+1}^{p} x_{\tau'(n)}-a\Big\Vert\leq \Vert y-a\Vert+\Big\Vert\sum _{n=k+1}^{p} x_{\tau'(n)}\Big\Vert\leq \min\Big\{\frac{\varepsilon}{12},\frac{1}{3}\cdot\delta\Big(\frac{\varepsilon}{2}\Big)\Big\}+\frac{\varepsilon}{2}<\varepsilon
$$
which gives us (2).

Now we check  (1), (3) and (4). Note that the numbers $1,\ldots,k_{0}$ are among $\tau'(1),\ldots,\tau'(k')$ and $k_{0}\geq\max \on{rng}\tau$. Therefore we have (1). Since $\sum _{n=1}^{k'}x_{\tau'(n)}=y+z+w$ and $\Vert y+z+w-b\Vert\leq\min\{\frac{\varepsilon'}{12},\frac13\delta(\frac{\varepsilon'}{2})\}$,  we obtain (3). Condition (4) follows from the fact that if $n\notin\on{rng}(\tau')$, then $n>k_{0}\geq N(\frac{\varepsilon'}{2})$.
\end{proof}

\begin{lemma}\label{osrodek}
Let $A$ be a subset of a Banach space such that $A$ is separable and $\varepsilon$-chainable for every $\varepsilon>0$. Let $(\eta_{i})$ be a sequence of positive numbers. Then there is a sequence $(d_{n})$ dense in $A$ with the property that there is an increasing sequence $(l_{i})$ such that $\{d_{l_{i}},d_{l_{i}+1},\ldots,d_{l_{i+1}}\}$ is an $\eta_{i}$-chain for every $i$.
\end{lemma}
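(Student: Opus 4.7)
The plan is to reduce the statement to a direct concatenation of chains furnished by the $\varepsilon$-chainability hypothesis. Because $A$ is separable, fix a sequence $(e_n)_{n=1}^\infty$ dense in $A$. For each $i\geq 1$, invoke $\eta_i$-chainability to pick a finite $\eta_i$-chain in $A$ connecting $e_i$ to $e_{i+1}$, i.e.\ points $c^{(i)}_0,c^{(i)}_1,\dots,c^{(i)}_{k_i}\in A$ with $c^{(i)}_0=e_i$, $c^{(i)}_{k_i}=e_{i+1}$, and $\Vert c^{(i)}_j-c^{(i)}_{j-1}\Vert<\eta_i$ for $j=1,\dots,k_i$.

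Now concatenate. Set $l_1=1$ and $l_{i+1}=l_i+k_i$ for $i\geq 1$, and define $d_{l_i+j}=c^{(i)}_j$ for $i\geq 1$ and $j=0,1,\dots,k_i$. This prescription is consistent at the overlaps: for each $i$, the index $l_{i+1}$ receives the common value $c^{(i)}_{k_i}=e_{i+1}=c^{(i+1)}_0$. By construction, $\{d_{l_i},d_{l_i+1},\dots,d_{l_{i+1}}\}$ is precisely the chain $c^{(i)}_0,\dots,c^{(i)}_{k_i}$, and hence is an $\eta_i$-chain in $A$.

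It remains to check density. Since $d_{l_i}=e_i$ for every $i$, the sequence $(e_n)$ is a subsequence of $(d_n)$; as $(e_n)$ is dense in $A$ and every $d_n$ lies in $A$, the sequence $(d_n)$ is itself dense in $A$. There is no genuine obstacle in the argument; the only point deserving attention is the index bookkeeping, specifically ensuring that the endpoint of the $i$-th chain coincides with the initial point of the $(i+1)$-th chain, which is arranged by choosing both to equal $e_{i+1}$.
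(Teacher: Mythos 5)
Your proposal is correct and follows essentially the same route as the paper: take a dense sequence in $A$, join consecutive terms by $\eta_i$-chains inside $A$, and concatenate, noting that the endpoints match and that density is inherited from the chosen dense sequence. The only difference is that you spell out the index bookkeeping explicitly, which the paper leaves implicit.
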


\begin{proof}
Since $A$ is separable, there are $v_{1},v_{2},\ldots$ such that $A=\overline{\{v_{n}: n\in\mathbb{N}\}}$. Then one can find an $\eta_{i}$-chain: $d_{l_{i}},d_{l_{i}+1},\ldots,d_{l_{i+1}}$ of elements of $A$ with $d_{l_{i}}=v_{i}$ and $d_{l_{i+1}}=v_{i+1}$ for any $i\in\mathbb{N}$. Clearly the sequence $\{d_{n}\}_{n=1}^{\infty}$ fulfills the desired condition.
\end{proof}

\begin{lemma}\label{ApproxOfDenseSet}
Let $A$ bea  separable and $\varepsilon$-chainable for every $\varepsilon>0$ subset of a Banach space. Assume that $\{d_i:i\in\N\}$ is a dense subset of $A$ and $(\varepsilon_i)$ is a sequence of positive numbers tending to zero. If $(x_i)$ is such that $\Vert x_i-d_i\Vert<\varepsilon_i$ for every $i\in\N$, then $\on{LIM}(x_i)=\overline{A}$ where $\on{LIM}(x_i)$ denotes the set of all limit points of the sequence $(x_i)$. 
\end{lemma}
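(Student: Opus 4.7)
The plan is to prove both inclusions by a direct triangle inequality argument, without invoking the $\varepsilon$-chainability hypothesis (which appears to be an auxiliary structural assumption inherited from the context where this lemma will be applied, rather than a condition that the statement itself requires).

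First I would show $\overline{A}\subseteq\on{LIM}(x_i)$. Fix $a\in\overline{A}$. Since $\{d_i:i\in\N\}$ is dense in $A$, hence dense in $\overline{A}$, there is a subsequence $(d_{i_k})$ with $d_{i_k}\to a$. The triangle inequality gives
$$
\Vert x_{i_k}-a\Vert\leq\Vert x_{i_k}-d_{i_k}\Vert+\Vert d_{i_k}-a\Vert<\varepsilon_{i_k}+\Vert d_{i_k}-a\Vert,
$$
and both summands tend to $0$ as $k\to\infty$. Hence $x_{i_k}\to a$, so $a\in\on{LIM}(x_i)$.

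For the reverse inclusion $\on{LIM}(x_i)\subseteq\overline{A}$, take $y\in\on{LIM}(x_i)$ and choose a subsequence $(x_{i_k})$ with $x_{i_k}\to y$. Using $\Vert x_{i_k}-d_{i_k}\Vert<\varepsilon_{i_k}\to 0$ together with the triangle inequality yields $d_{i_k}\to y$. Because each $d_{i_k}$ lies in $A$, the limit $y$ belongs to $\overline{A}$.

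The main (and only) thing to be careful about is verifying that one really can extract a subsequence of $(d_i)$ converging to an arbitrary prescribed $a\in\overline{A}$; this is immediate from density of $\{d_i:i\in\N\}$ in $A$ (and hence in $\overline{A}$), since every neighborhood of $a$ meets $\{d_i\}$ in infinitely many indices. Everything else is purely mechanical. It is worth observing explicitly, however, that neither $\varepsilon$-chainability nor separability is used in the proof itself; they are listed because the preceding Lemma \ref{osrodek} supplies the dense sequence $(d_n)$ in precisely the form needed for the intended application.
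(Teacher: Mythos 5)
There is a genuine gap in your forward inclusion, and it sits exactly at the point you flagged as "immediate". Density of $\{d_i:i\in\N\}$ in $A$ guarantees that every neighborhood of $a\in\overline{A}$ contains \emph{some} $d_i$, but not that it contains $d_i$ for \emph{infinitely many indices} $i$: a value of the sequence can recur only finitely often. If $a$ is an isolated point of $A$ which appears only finitely many times among the $d_i$, your extraction of a subsequence $d_{i_k}\to a$ with $i_k\to\infty$ fails, and without $i_k\to\infty$ you cannot conclude $\varepsilon_{i_k}\to 0$. Concretely, for $A=\{0,1\}$ with $d_1=0$, $d_i=1$ for $i\geq 2$ and $x_i=d_i$, one has $\on{LIM}(x_i)=\{1\}\neq\overline{A}$; this shows the lemma is actually \emph{false} without the $\varepsilon$-chainability hypothesis, so your explicit claim that chainability is not used cannot be right.

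The paper's proof uses chainability precisely here: if $A$ is a singleton the statement is trivial ($d_i=a$ for all $i$, so $x_i\to a$); otherwise $\varepsilon$-chainability for every $\varepsilon>0$ together with $|A|\geq 2$ forces $A$ to be dense-in-itself (any chain of mesh $\varepsilon<\Vert a-b\Vert$ from $a$ to $b$ must pass through a point of $A\setminus\{a\}$ within $\varepsilon$ of $a$). Then every neighborhood of $a$ contains infinitely many distinct points of $A$, hence infinitely many distinct values $d_j$, hence indices $j_1<j_2<\dots$ with $d_{j_k}\to a$, and your triangle-inequality computation goes through. Your reverse inclusion is fine and matches the paper's. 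To repair your write-up, add the singleton case and the dense-in-itself observation; the rest can stand.
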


\begin{proof}
If $A$ is a singleton, then $d_i=a$, $A=\{a\}$ and $x_i\to a$. Then $\on{LIM}(x_i)=\{a\}=\overline{A}$. Assume that $A$ has at least two elements. Clearly $A$ is dense-in-itself. Fix $i\in\N$. There is a sequence $(d_{j_k})_{k=1}^\infty$ such that $j_1<j_2<\dots$ and $\Vert d_{j_k}-d_i\Vert<\varepsilon_{j_k}$. Then $x_{j_k}\to d_i$ and consequently $d_i\in\on{LIM}(x_i)$. Since the set $\on{LIM}(x_i)$ is closed, we have $\overline{A}\subseteq\on{LIM}(x_i)$. 

Note that for every $k$ almost every element of $(x_i)$ is in $\varepsilon_k$-shell of $A$. Thus  $\overline{A}\supseteq\on{LIM}(x_i)$.
\end{proof}

\begin{theorem}\label{mainThm}
Let  $\sum _{n=1}^{\infty} x_{n}$ be a conditionally convergent series with (RP) in a Banach space $X$. Then for every $A\subseteq\overline{\on{SR}(\sum_{n=1}^{\infty} x_{n})}$ which is closed and $\varepsilon$-chainable for every $\varepsilon>0$, there exists a permutation $\tau\in S_\infty$ such that $A=\on{LIM}(\sum _{n=1}^{\infty}x_{\tau(n)})$. 
\end{theorem}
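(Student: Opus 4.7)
The plan is to build $\tau$ incrementally by iterated application of Lemma \ref{przedluzenie}, steering the partial sums along a very fine chain in $A$ whose vertices have been approximated by points of $\on{SR}(\sum_{n=1}^\infty x_n)$.

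First, observe that $A$ is separable because $A\subseteq\overline{\on{SR}(\sum_{n=1}^\infty x_n)}\subseteq\overline{\on{span}\{x_n:n\in\N\}}$. Fix a decreasing null sequence $(\varepsilon_i)$ of positive reals together with companion numbers $\delta_i:=\delta(\varepsilon_i/2)$ and $N_i:=N(\varepsilon_i/2)$ from (RP), arranged (as allowed) so that $\delta_i$ is decreasing, $N_i$ is increasing and $N_i\to\infty$. Put $\eta_i:=\frac13\min\{\varepsilon_i/12,\delta_i/3\}$ and, invoking Lemma \ref{osrodek}, pick a sequence $(d_n)$ dense in $A$ together with indices $l_1<l_2<\dots$ such that $d_{l_i},d_{l_i+1},\dots,d_{l_{i+1}}$ is an $\eta_i$-chain. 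For each $n$, let $i(n)$ be the index with $l_{i(n)}\leq n<l_{i(n)+1}$, and choose $a_n\in\on{SR}(\sum_{k=1}^\infty x_k)$ with $\Vert a_n-d_n\Vert<\eta_{i(n)}$. Then consecutive approximants satisfy $\Vert a_n-a_{n+1}\Vert<3\eta_{i(n)}\leq\min\{\varepsilon_{i(n)}/12,\delta_{i(n)}/3\}$, exactly the closeness hypothesis of Lemma \ref{przedluzenie}.

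Next, construct partial permutations $\tau_n:[1,k_n]\to\N$ inductively. Using that $a_1\in\on{SR}(\sum_{k=1}^\infty x_k)$, choose $\tau_1$ so that $\Vert\sum_{j=1}^{k_1}x_{\tau_1(j)}-a_1\Vert\leq\min\{\varepsilon_{i(1)}/12,\delta_{i(1)}/3\}$ and $\on{rng}(\tau_1)\supseteq[1,N_{i(1)}]$. For the inductive step, apply Lemma \ref{przedluzenie} with $a:=a_n$, $b:=a_{n+1}$, tolerance $\varepsilon=\varepsilon_{i(n)}$ for the current transition and $\varepsilon'=\varepsilon_{i(n+1)}$ for the next; conclusions (3) and (4) of the lemma become precisely the input hypotheses of the following invocation, so the induction is self-sustaining. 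Set $\tau:=\bigcup_n\tau_n$; conclusion (4) together with $N_i\to\infty$ guarantees $\on{rng}(\tau)=\N$, so $\tau\in S_\infty$.

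Finally, set $s_p:=\sum_{n=1}^p x_{\tau(n)}$. For $p\in[k_n,k_{n+1}]$, conclusion (2) of Lemma \ref{przedluzenie} yields $\Vert s_p-a_n\Vert\leq\varepsilon_{i(n)}$, so $\on{dist}(s_p,A)\leq\varepsilon_{i(n)}+\eta_{i(n)}\to 0$, forcing $\on{LIM}((s_p))\subseteq A$. Conversely $\Vert s_{k_n}-d_n\Vert\to 0$, so Lemma \ref{ApproxOfDenseSet} applied to the subsequence $(s_{k_n})$ gives $A=\overline{A}\subseteq\on{LIM}((s_{k_n}))\subseteq\on{LIM}((s_p))$. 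The chief obstacle is the bookkeeping: the parameters $\eta_i,\varepsilon_i,\delta_i,N_i$ must be coordinated so that at each stage the output tolerance of Lemma \ref{przedluzenie} precisely meets the input tolerance required by the next stage, while the intermediate bound (2) is tight enough to collapse $(s_p)$ onto $A$ in the limit. The choices above are designed for exactly this matching.
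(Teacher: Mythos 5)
Your proposal is correct and follows essentially the same route as the paper's proof: approximate a chained dense sequence of $A$ by points of $\on{SR}(\sum_{n=1}^\infty x_n)$ via Lemma \ref{osrodek}, iterate Lemma \ref{przedluzenie} so that each output tolerance feeds the next input, take $\tau=\bigcup_n\tau_n$, and finish with Lemma \ref{ApproxOfDenseSet}. The only cosmetic differences are your choice of constants ($\eta_i=\frac13\min\{\varepsilon_i/12,\delta_i/3\}$ versus the paper's $\min\{\varepsilon_i/48,\delta_i/12\}$, both of which satisfy the required inequalities) and your argument for surjectivity of $\tau$ via $N_i\to\infty$ rather than via the inclusion $[1,\max\on{rng}\tau_i]\subseteq\on{rng}\tau_{i+1}$; both are valid.
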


\begin{proof}
Let $\varepsilon_{i}=\frac{1}{2^{i}}$. We fix numbers $\delta(\frac{\varepsilon_i}{2}),N(\frac{\varepsilon_i}{2})$ from the definition of (RP) such that $\delta(\frac{\varepsilon_i}{2})\geq\delta(\frac{\varepsilon_{i+1}}{2})$ and $N(\frac{\varepsilon_i}{2})\leq N(\frac{\varepsilon_{i+1}}{2})$ for every $i\in\N$. Since $A$ is separable and $\varepsilon$-chainable for every $\varepsilon>0$, using Lemma \ref{osrodek}, let $A=\overline{\{d_{n} :n\in\mathbb{N}\}}$, where for every $i\in\mathbb{N}$ the elements $d_{l_{i}},d_{l_{i}+1},\ldots,d_{l_{i+1}}$ form a $\eta_{i}$-chain for some $1=l_{1}<l_{2}<\ldots$, where $\eta_{i}=\min\{\frac{\varepsilon_{i}}{48},\frac{1}{12}\cdot\delta(\frac{\varepsilon_{i}}{2})\}$. Note that $(\eta_i)$ is a non-increasing sequence of positive real numbers. 

Inductively we define natural numbers $1=k_1<k_2<\dots$, one-to-one functions $\tau_i:[1,k_{i+1}]\to\N$ and $d_1',d_2',\dots$ fulfilling the following conditions
\begin{itemize}
\item[(i)] $\tau_i\subseteq\tau_{i+1}$;
\item[(ii)] $[1,\max\on{rng}\tau_i]\subseteq\on{rng}(\tau_{i+1})$;
\item[(iii)] $\Vert\sum_{n=1}^p x_{\tau_i(n)}-d_{i-1}'\Vert<\varepsilon_j$ for $p\in[k_i+1,k_{i+1}]$ and $i\in[l_j+1,l_{j+1}]$;
\item[(iv)] $d_i'\in\on{SR}(\sum_{n=1}^\infty x_n)$, $\Vert d_i'-d_i\Vert<\eta_j$ where $i\in[l_j,l_{j+1}-1]$;
\item[(v)] $\Vert\sum_{n=1}^{k_{i+1}}x_{\tau_i(n)}-d_i'\Vert<4\eta_j$ where $i\in[l_j,l_{j+1}-1]$;
\item[(vi)] $\on{rng}\tau_{i}\supseteq[1,N(\frac{\varepsilon_j}{2})]$ where $i\in[l_j,l_{j+1}-1]$. 
\end{itemize}

Define $x=\sum _{n=1}^{N(\varepsilon_{1}/2)}x_{n}$. Let $d_{1}'\in SR(\sum _{n=1}^{\infty} x_{n})$ be such that $\Vert d_{1}-d_{1}'\Vert<\eta_{1}$. Hence from Lemma \ref{przesuniecie} we get $d_{1}'-x\in SR(\sum _{N(\frac{\varepsilon_{1}}{2})+1}^{\infty}x_{n})$. Let $\pi:[N(\frac{\varepsilon_{1}}{2})+1,\infty)\rightarrow[N(\frac{\varepsilon_{1}}{2})+1,\infty)$ be a bijection such that $d_{1}'-x=\sum _{n=N(\varepsilon_{1}/2)+1}^{\infty} x_{\pi(n)}$. One can find a natural number $k_{2}>N(\frac{\varepsilon_{1}}{2})$, which satisfies the inequality $\Vert d_{1}'-x-\sum _{n=N(\varepsilon_{1}/2)+1}^{k_{2}} x_{\pi(n)}\Vert\leq\eta_{1}$. Define $\tau_1(k)=k$ for $k\leq N(\frac{\varepsilon_{1}}{2})$ and $\tau_1(k)=\pi(k)$ for $k\in[N(\frac{\varepsilon_{1}}{2})+1,k_{2}]$. Conditions (i)--(vi) are fulfilled for $\tau_1,d_1',k_1,k_2$. We do not need to check (i) and (ii), condition (iii) needs to be checked for $i\geq l_1+1=2$ and conditions (iv)--(vi) are fulfilled since $l_1=1$. 

Assume now, that we have already defined $\tau_1,\dots,\tau_i$, $k_1<\dots<k_{i+1}$ and $d_1',\dots,d_i'$ fulfilling (i)--(vi). Find $d_{i+1}'$ such that (iv) holds. We use Lemma \ref{przedluzenie} for $a=d_i'$, $b=d_{i+1}'$, $\tau=\tau_i$, $\varepsilon=\varepsilon_j$ where $l_j\leq i\leq l_{j+1}-1$, $\varepsilon'=\varepsilon_q$ where $l_q-1\leq i\leq l_{q+1}-2$, and $k=k_{i+1}$; note that $j=q$ if $l_j\leq i<l_{j+1}-1$, that is if $i\notin\{l_{s}-1:s\geq 1\}$, otherwise $i=l_{j+1}-1$ implies that $q=j+1$. Let us check the assumptions of Lemma \ref{przedluzenie}. By (iv) and (vi) we obtain $a,b\in\on{SR}(\sum_{n=1}^\infty x_n)$ and $\on{rng}\tau\supseteq[1,N(\frac{\varepsilon}{2})]$. Since $d_{l_j},d_{l_j+1},\dots,d_{l_{j+1}}$ form a $\eta_j$-chain, by (iv) we obtain
$$
\Vert a-b\Vert\leq\Vert d_i-d_i'\Vert+\Vert d_i-d_{i+1}\Vert+\Vert d_{i+1}-d_{i+1}'\Vert\leq\eta_j+\eta_j+\eta_j<4\eta_j=\min\Big\{\frac{\varepsilon}{12},\frac13\cdot\delta\Big(\frac{\varepsilon}{2}\Big)\Big\}.
$$
By (v) we obtain $\Vert\sum_{n=1}^kx_{\tau(n)}-a\Vert\leq 4\eta_j=\min\{\frac{\varepsilon}{12},\frac13\cdot\delta(\frac{\varepsilon}{2})\}$.  Now, using Lemma \ref{przedluzenie} we find $k_{i+2}>k_{i+1}$ and function $\tau_{i+1}:[1,k_{i+2}]\to\N$ such that 
\begin{enumerate}
\item $\tau_{i+1}|_{[1,k_{i+1}]}=\tau_i$ and $[1,\max\on{rng}\tau_i]\subseteq \on{rng}\tau_{i+1}$;
\item $\Vert \sum_{n=1}^{p} x_{\tau_{i+1}(n)}-d_i'\Vert\leq\varepsilon_j$ where $p\in[k_{i+1}+1,k_{i+2}]$ and $i\in[l_j,l_{j+1}-1]$;
\item $\Vert \sum _{n=1}^{k_{i+2}} x_{\tau_{i+1}(n)}-d_{i+1}'\Vert\leq 4\eta_q$ where $i\in[l_q-1,l_{q+1}-2]$;
\item $\on{rng}(\tau_{i+1})\supseteq[1,N(\frac{\varepsilon_{q}}{2})]$ where $i\in[l_q-1,l_{q+1}-2]$.
\end{enumerate}
Note that $\tau_1,\dots,\tau_{i+1}$, $k_1<\dots<k_{i+2}$ and $d_1',\dots,d_{i+1}'$ fulfill (i)--(vi): By (1) we obtain conditions (i) and (ii). Since the condition $i+1\in[l_j+1,l_{j+1}]$ is equivalent to $i\in[l_j,l_{j+1}-1]$, we obtain (iii). The element $d_{i+1}'$ has already been chosen to fulfill (iv). Conditions (3) and (4) are exactly (v) and (vi) for $i+1$.  

Let $\tau=\bigcup_{i\geq 1}\tau_i:\N\to\N$. Then (i) implies that $\tau$ is one-to-one. Condition (ii) implies that $\tau$ is onto $\N$, and consequently $\tau\in S_\infty$. By (iii) and (iv) we obtain that the distance between $A$ and $\sum_{n=1}^p x_{\tau(n)}$ is less than $1/2^j$ for almost every $p\in\N$. Thus $\on{LIM}(\sum _{n=1}^{\infty}x_{\tau(n)})\subseteq\overline{A}$. By (iv) and (v) we obtain that $\Vert\sum_{n=1}^{k_{i+1}}x_{\tau(n)}-d_i\Vert<5\eta_j<\varepsilon_j$ where $i\in[l_j,l_{j+1}-1]$. Thus by Lemma \ref{ApproxOfDenseSet} we get $\overline{A}=\on{LIM}((\sum_{n=1}^{k_{i+1}}x_{\tau(n)})_{i=1}^\infty)\subseteq\on{LIM}(\sum _{n=1}^{\infty}x_{\tau(n)})$.
\end{proof}

It is well-known that every conditionally convergent series of elements in a finite dimensional Banach space has the (RP), for detail see \cite{Kadets}. Thus, the Klee's result which we have mentioned at the beginning of this Section is a particular case of Theorem \ref{mainThm}. Combining methods used in proofs of Theorem \ref{mainThm} and Theorem \ref{ReverseCharacterization} one can prove the following strengthening of Theorem \ref{ReverseCharacterization}.
\begin{corollary}
Let $m\geq 2$. Assume that $\on{SR}(\sum_{n=1}^\infty x_n)=\R^m$, $X\subseteq\R^m$ is closed and any component of $X$ is unbounded. Then $X=\on{LIM}(\sum_{n=1}^\infty x_{\sigma(n)})$ for some $\sigma\in S_\infty$.
\end{corollary}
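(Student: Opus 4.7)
The plan is to combine the machinery of Theorem \ref{mainThm} with the sphere-bridging construction used in Theorem \ref{ReverseCharacterization}. Since every conditionally convergent series in a finite-dimensional Banach space has the Rearrangement Property (as recalled just before the statement), Lemmas \ref{przesuniecie} and \ref{przedluzenie} are available for the given $\sum_{n=1}^\infty x_n$. The hypothesis $\on{SR}(\sum_{n=1}^\infty x_n)=\R^m$ further ensures that every point of $\R^m$ lies in $\overline{\on{SR}(\sum_{n=1}^\infty x_n)}$, so any point can serve as a target in the extension step; in this sense the corollary is the natural common strengthening of Theorems \ref{mainThm} and \ref{ReverseCharacterization}.

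First I would decompose $X=\bigcup_{t\in T}A_t$ into its unbounded connected components, fix a dense sequence $(d_n)$ in $X$, and choose radii $R_n\nearrow\infty$ such that $S(0,R_n)$ meets every component containing any of $d_1,\ldots,d_{n+1}$, exactly as in the proof of Theorem \ref{ReverseCharacterization}. With $\varepsilon_i=2^{-i}$ and $\eta_i=\min\{\varepsilon_i/48,\delta(\varepsilon_i/2)/12\}$ as in the proof of Theorem \ref{mainThm}, I would then build, for each $i$ and each relevant index $j$, an $\eta_i$-chain in $\R^m$ from $d_j$ to $d_{j+1}$ via a sphere $S(0,R_{k(i)})$ with $k(i)\to\infty$: traverse the component of $d_j$ out to a point on $S(0,R_{k(i)})$, then traverse the sphere to a point on the component of $d_{j+1}$, and finally traverse inward to $d_{j+1}$. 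Concatenating these chains yields an auxiliary sequence $(e_n)$ lying in $\R^m=\overline{\on{SR}(\sum_{n=1}^\infty x_n)}$ whose consecutive blocks form $\eta_i$-chains and which contains $(d_n)$ as a subsequence.

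Next I would run the inductive construction from the proof of Theorem \ref{mainThm} essentially verbatim, but with $(e_n)$ playing the role of the dense chain sequence. Lemma \ref{przedluzenie} is applicable since every $e_n$ belongs to $\on{SR}(\sum_{n=1}^\infty x_n)=\R^m$, and it lets one extend a partial permutation $\tau_i$ to $\tau_{i+1}$ so that the corresponding partial sums track the $e_j$'s with error controlled by $\varepsilon_i$ throughout block $i$ and by $4\eta_i$ at the block endpoints. This produces $\tau\in S_\infty$; density of $(d_n)\subseteq(e_n)$ in $X$, combined with Lemma \ref{ApproxOfDenseSet} applied to the subsequence of partial sums landing near the $d_n$'s, then yields $X\subseteq\on{LIM}(\sum_{n=1}^\infty x_{\tau(n)})$.

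The main obstacle will be the reverse inclusion $\on{LIM}(\sum_{n=1}^\infty x_{\tau(n)})\subseteq X$, because the auxiliary sequence $(e_n)$ contains genuine points outside $X$ on the bridging spheres. The decisive observation, borrowed from the last paragraph of the proof of Theorem \ref{ReverseCharacterization}, is that each sphere $S(0,R_{k(i)})$ is visited only during the $i$-th block, so every fixed compact subset of $\R^m$ meets bridging spheres for only finitely many values of $i$; since $R_{k(i)}\to\infty$, a bounded limit point must be traceable to chain elements in $X$, and the closedness of $X$ finishes the argument. The delicate bookkeeping must be carried out inside the Theorem \ref{mainThm} framework so that the tolerances $\varepsilon_i,\eta_i$ are small enough on each block to prevent the partial sums from wandering out of an $\varepsilon_i$-neighborhood of the chain currently being followed — this is precisely the role of condition (iii) in the proof of Theorem \ref{mainThm}, and where the finite-dimensional (RP) is crucially used.
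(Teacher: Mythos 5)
Your proposal is correct and follows exactly the route the paper intends: the paper gives no detailed proof of this corollary, only the remark that one combines the sphere-bridging walk of Theorem \ref{ReverseCharacterization} with the (RP)-based tracking construction of Theorem \ref{mainThm}, which is precisely what you carry out (including the key points that finite-dimensionality supplies (RP), that $\on{SR}=\R^m$ makes every chain point an admissible target for Lemma \ref{przedluzenie}, and that the bridging spheres escape to infinity so they contribute no spurious limit points).
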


Note that singletons are trivially $\varepsilon$-chainable for every $\varepsilon>0$. Fix $a\in\overline{\on{SR}(\sum_{n=1}^{\infty} x_{n})}$. Using Theorem \ref{mainThm} for $A\subseteq\overline{\on{SR}(\sum_{n=1}^{\infty} x_{n})}$ such that $A=\{a\}$, we obtain that there is $\tau\in S_\infty$ such that $\on{LIM}(\sum_{n=1}^{\infty} x_{\tau(n)})=\{a\}$. As we have seen in Proposition \ref{rozbieznysingleton} is does not necessarily mean that $\sum_{n=1}^{\infty} x_{\tau(n)}=a$. However, if we put $d_i=a$, then $d_i'\to a$ and by condition (iii) we get that almost all elements of the sequence $(\sum_{n=1}^{p} x_{\tau(n)})_{p=1}^\infty$ of partial sums are in every neighborhood of $a$. Therefore $\sum_{n=1}^{\infty} x_{\tau(n)}$ is convergent to $a$. Thus $a\in\on{SR}(\sum_{n=1}^{\infty} x_{n})$. Hence as a byproduct of the proof of Theorem \ref{mainThm} we obtain the following.
\begin{corollary}\label{CorMain}
Let  $\sum _{n=1}^{\infty} x_{n}$ be a conditionally convergent series in a Banach space $X$, which has the (RP). Then its sum range $\on{SR}(\sum_{n=1}^{\infty} x_{n})$ is a closed set. 
\end{corollary}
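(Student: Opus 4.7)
The goal is to show that if $a\in\overline{\on{SR}(\sum_{n=1}^\infty x_n)}$, then $a\in\on{SR}(\sum_{n=1}^\infty x_n)$. The natural first move is to specialize Theorem \ref{mainThm} to the closed singleton $A=\{a\}$, which is trivially $\varepsilon$-chainable for every $\varepsilon>0$. This produces a permutation $\tau\in S_\infty$ with $\on{LIM}(\sum_{n=1}^\infty x_{\tau(n)})=\{a\}$. Proposition \ref{rozbieznysingleton} shows, however, that the equality of the limit set with $\{a\}$ does not by itself entail convergence of the rearrangement to $a$. Thus a literal application of Theorem \ref{mainThm} as a black box is insufficient, and the plan is instead to inspect the construction used in its proof and extract genuine convergence from it.

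Concretely, I would rerun that construction verbatim, but with the constant dense sequence $d_i=a$ for all $i$. Condition (iv) of the construction then yields auxiliary points $d_i'\in\on{SR}(\sum_{n=1}^\infty x_n)$ with $\Vert d_i'-a\Vert<\eta_j$ whenever $i\in[l_j,l_{j+1}-1]$; since $\eta_j\to 0$, this forces $d_i'\to a$. Condition (iii) supplies the uniform bound $\Vert\sum_{n=1}^p x_{\tau(n)}-d_{i-1}'\Vert<\varepsilon_j$ for $p\in[k_i+1,k_{i+1}]$ with $i\in[l_j+1,l_{j+1}]$. The triangle inequality combines these into $\Vert\sum_{n=1}^p x_{\tau(n)}-a\Vert<\varepsilon_j+\eta_j$ for every $p$ past a threshold depending on $j$; letting $j\to\infty$ shows that the full sequence of partial sums converges to $a$. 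Consequently $\sum_{n=1}^\infty x_{\tau(n)}=a$, which places $a$ inside $\on{SR}(\sum_{n=1}^\infty x_n)$ and closes the argument.

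There is no real technical obstacle once one commits to reopening the proof of Theorem \ref{mainThm} rather than quoting it as a black box: the additional information we need, namely that each $d_i'$ approximates the target and that the partial sums are tethered to the $d_i'$, is already built into conditions (iii) and (iv) of that construction. The only conceptual point, and the one that makes the corollary genuinely informative in view of Proposition \ref{rozbieznysingleton}, is to recognize that for singleton targets the two estimates cooperate to give Cauchy-type control, upgrading ``limit set equals $\{a\}$'' to outright convergence to $a$.
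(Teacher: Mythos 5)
Your proposal is correct and follows essentially the same route as the paper: the authors likewise observe that a black-box application of Theorem \ref{mainThm} to $A=\{a\}$ is not enough (citing Proposition \ref{rozbieznysingleton}), and instead rerun the construction with $d_i=a$, deducing $d_i'\to a$ from condition (iv) and convergence of the full partial-sum sequence to $a$ from condition (iii). Your triangle-inequality bookkeeping just makes explicit what the paper states more briefly.
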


Now, we will discuss a problem whether or not Corollary \ref{CorMain} can be reversed, namely whether or not the closedness of the sum range $\on{SR}(\sum_{n=1}^{\infty} x_{n})$ implies the (RP) for a series $\sum_{n=1}^{\infty} x_{n}$. By $S_n$ we denote the set of all permutations of the set $[1,n]$. 
\begin{lemma}\label{LemmaNotRP}
Let $k\in\mathbb{N}$ and $n={2k \choose k}$, then there exists a finite sequence $x_{1},\ldots,x_{2k}\in\R^{n}$ such that:
\begin{enumerate}
\item $\Vert x_{i}\Vert_{\sup}=1$ for every $i\leq 2k$.
\item $\Vert\sum _{i=1}^{k} x_{\sigma(i)}\Vert_{\sup}\geq k$ for every $\sigma\in S_{2k}$.
\item $\sum _{i=1}^{2k} x_{i}=0$.
\end{enumerate}
\end{lemma}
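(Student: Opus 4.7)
The plan is to realize the coordinates of $\R^n$ as being indexed by the family $\binom{[1,2k]}{k}$ of all $k$-element subsets of $\{1,2,\dots,2k\}$; since there are $\binom{2k}{k}=n$ such subsets, this yields a natural bijection between them and the coordinates of $\R^n$. For each $i\in[1,2k]$ I would define $x_i\in\R^n$ by prescribing its $T$-th coordinate, for every $T\in\binom{[1,2k]}{k}$, to be $+1$ when $i\in T$ and $-1$ when $i\notin T$.

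With this definition, condition (1) is immediate: every coordinate of every $x_i$ is $\pm 1$, so $\Vert x_i\Vert_{\sup}=1$. For condition (3), I would fix a coordinate $T$ and observe that among the $2k$ indices $i\in[1,2k]$, exactly $|T|=k$ satisfy $i\in T$ (each contributing $+1$ at the $T$-th coordinate) and the remaining $2k-k=k$ satisfy $i\notin T$ (each contributing $-1$); the contributions cancel coordinate-wise, so $\sum_{i=1}^{2k}x_i=0$.

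The essential step is condition (2). Given any $\sigma\in S_{2k}$, the set $S:=\{\sigma(1),\dots,\sigma(k)\}$ is a $k$-element subset of $[1,2k]$ and therefore itself one of the coordinate indices. The value of $\sum_{i=1}^{k}x_{\sigma(i)}$ at the $S$-th coordinate is $\sum_{i\in S}\varepsilon_{i,S}$, where $\varepsilon_{i,S}=+1$ if $i\in S$ and $-1$ otherwise; but every $i\in S$ obviously lies in $S$, so every term equals $+1$ and the $S$-th coordinate is exactly $k$. Hence $\Vert\sum_{i=1}^{k}x_{\sigma(i)}\Vert_{\sup}\geq k$, as required.

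There is no substantive obstacle in this argument: the only ingenuity is in choosing the right indexing of coordinates, after which (1)--(3) reduce to elementary counting of elements of finite sets. In particular, the fact that $\binom{[1,2k]}{k}$ simultaneously parametrises the coordinate axes and the possible $k$-subsets $\{\sigma(1),\dots,\sigma(k)\}$ is what forces the sup-norm of every partial sum of length $k$ to witness its own maximality on the coordinate indexed by that very subset.
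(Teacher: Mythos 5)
Your proposal is correct and is essentially the paper's own construction: identifying each $k$-subset $T\subseteq[1,2k]$ with the $\pm1$-sequence that is $+1$ exactly on $T$ recovers the paper's enumeration $t_1,\dots,t_n$ and its definition $x_i(j)=t_j(i)$, and your verification of (2) via the coordinate indexed by $\{\sigma(1),\dots,\sigma(k)\}$ is the same argument. The only difference is cosmetic: you also spell out the (routine) checks of (1) and (3), which the paper leaves implicit.
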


\begin{proof}
Let $k\in\mathbb{N}$. There are $n=\left(2k\atop k\right)$ sequences of length $2k$ consisting of $k$ many $1$'s and $k$ many $-1$'s. Enumerate all such sequences as $t_1,\dots,t_n$. Define $x_i(j)=t_j(i)$ for $j=1,\dots,n$ and $i=1,\dots,2k$. Now, if $\sigma\in S_{2k}$, then there is a sequence $t_{j_{\sigma}}$ such that $t_{j_{\sigma}}(\sigma(i))=1$ for $i=1,\dots,k$ and $t_{j_{\sigma}}(\sigma(i))=-1$ for $i=k+1,\dots,2k$. Thus
$$
\sum_{i=1}^kx_{\sigma(i)}(j_{\sigma})=k,
$$
and consequently
$$
\Vert\sum_{i=1}^kx_{\sigma(i)}\Vert_{\sup}\geq k.
$$
\end{proof}

Before we state the last result, first note that if the series $\sum _{i=1}^{\infty} x_{i}$ does not have (RP), then one can find $\varepsilon>0$ such that for every $\delta>0$ and $N\in\mathbb{N}$ there exists the finite subsequence $\{y_{i}\}_{i=1}^{n}\subseteq \{x_{i}\}_{i=N}^{\infty}$ for which two conditions hold:
\begin{itemize}
\item $\Vert\sum _{i=1}^{n} y_{i}\Vert<\delta$.
\item for every $\sigma\in S_{n}$ there is $j\leq n$ such that $\Vert \sum _{i=1}^{j} y_{\sigma(i)}\Vert_{\sup}\geq\varepsilon$.
\end{itemize}
The following theorem shows that Corollary \ref{CorMain} cannot be reversed. 

\begin{theorem}
There is a conditionally convergent series $\sum_{n=1}^\infty z_n$ in $c_0$ such that it does not have (RP) and its sum range $\on{SR}(\sum_{n=1}^\infty z_n)$ is a singleton, in particular it is a closed set.
\end{theorem}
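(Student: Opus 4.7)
The plan is to use the bad blocks from Lemma \ref{LemmaNotRP} as witnesses for the failure of (RP), while inserting exact cancellations in the natural order so that $\sum_n z_n$ converges to $\theta$, and to exploit pairwise disjoint coordinate supports to force the sum range to be $\{\theta\}$.

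For each $k\in\N$ I will apply Lemma \ref{LemmaNotRP} to obtain vectors $x_1^{(k)},\ldots,x_{2k}^{(k)}\in\R^{n_k}$ with $n_k=\binom{2k}{k}$, and embed them in $c_0$ via pairwise disjoint finite coordinate blocks (one block per $k$), so that every resulting term has nonzero values only within a single block. The $k$-th chunk of the series will consist of the $4k$ vectors
$$
\frac{x_1^{(k)}}{k},\ -\frac{x_1^{(k)}}{k},\ \frac{x_2^{(k)}}{k},\ -\frac{x_2^{(k)}}{k},\ \ldots,\ \frac{x_{2k}^{(k)}}{k},\ -\frac{x_{2k}^{(k)}}{k},
$$
i.e.\ each $k^{-1}x_i^{(k)}$ immediately followed by its negative; concatenating these chunks for $k=1,2,\ldots$ produces $\sum_{n=1}^\infty z_n$.

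The verification has four steps. First, convergence to $\theta$: the pairwise cancellation inside the $k$-th chunk makes every partial sum inside the chunk differ from its opening value (which equals $\theta$ because each earlier chunk sums to $\theta$) by at most one term, hence by at most $1/k$ in sup-norm, so partial sums converge to $\theta$. Second, conditional convergence: $\sum_n\|z_n\|_\infty=\sum_k 4k\cdot k^{-1}=\infty$. Third, $\on{SR}(\sum z_n)=\{\theta\}$: each coordinate is nonzero for only finitely many $z_n$'s and these finitely many entries sum to $0$, and since convergence in $c_0$ implies coordinate-wise convergence, every convergent rearrangement must sum to $\theta$. Fourth, failure of (RP) with $\varepsilon=1$: given $\delta>0$ and $N\in\N$, choose $k$ so large that the $k$-th chunk lies past index $N$ and put $y_i=k^{-1}x_i^{(k)}$ for $i=1,\ldots,2k$; then $\sum_i y_i=\theta$, yet by Lemma \ref{LemmaNotRP} every $\sigma\in S_{2k}$ satisfies
$$
\Big\|\sum_{i=1}^{k}y_{\sigma(i)}\Big\|_\infty=\frac{1}{k}\Big\|\sum_{i=1}^{k}x_{\sigma(i)}^{(k)}\Big\|_\infty\geq 1.
$$

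The main obstacle is the tension between failure of (RP) and the existence of a convergent rearrangement: if one simply places the scaled Lemma \ref{LemmaNotRP} vectors $k^{-1}x_i^{(k)}$ into disjoint coordinate blocks without the cancelling negatives, then in any rearrangement the $k$-th partial contribution inside block $k$ has sup-norm at least $1$, so no rearrangement can converge in $c_0$ and the sum range would be empty. Inserting the exact negatives with the interleaved order resolves this: the natural order becomes convergent (so $\theta\in\on{SR}$), while the ``positive half'' of each chunk still forms a finite subsequence whose rearrangements cannot drive partial sums below $\varepsilon=1$.
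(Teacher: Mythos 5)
Your construction is correct and is essentially the paper's own proof: the paper applies Lemma \ref{LemmaNotRP} with parameter $2^{k}$ and scales by $2^{-k}$ where you use parameter $k$ and scale by $k^{-1}$, but the disjoint-block embedding, the interleaved cancelling negatives, the coordinatewise argument for $\on{SR}(\sum_{n=1}^\infty z_n)=\{\theta\}$, and the witness for the failure of (RP) with $\varepsilon=1$ are all the same. (One small point: non-absolute convergence alone does not certify conditional convergence in $c_0$, but your (RP)-failure blocks already show the series is not unconditionally convergent, so the conclusion stands.)
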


\begin{proof}
Define $e_{n}=(\delta_{in})_{i=1}^{\infty}$ for every $n\in\mathbb{N}$, where $\delta_{in}=1$, if $i=n$ and $\delta_{in}=0$ otherwise. Let $n_{0}=0$ and $n_{k}={2^{k+1} \choose 2^{k}}+n_{k-1}$. For every $k\in\mathbb{N}$ let $x_{1}^{(k)},\dots,x_{2^{k+1}}^{(k)}\in\R^{n_{k}-n_{k-1}}$ will be the sequence constructed in Lemma \ref{LemmaNotRP}. Define $y_{i}^{(k)}=\frac{1}{2^{k}}\cdot\sum _{j=1}^{n_{k}-n_{k-1}}x_{i}^{(k)}(j)\cdot e_{n_{k-1}+j}$ for $i,k\in\N$. It easy to see that $y_{i}^{(k)}\in c_{0}$. Define the series $\sum _{n=1}^{\infty}z_{n}$ as follows: 
$$
z_{1}=y_{1}^{(1)}, z_{2}=-y_{1}^{(1)}, z_{3}=y_{2}^{(1)}, z_{4}=-y_{2}^{(1)}, z_{5}=y_{3}^{(1)}, z_{6}=-y_{3}^{(1)}, z_{7}=y_{4}^{(1)}, z_{8}=-y_{4}^{(1)}, z_{9}=y_{1}^{(2)}, z_{10}=-y_{1}^{(2)},\ldots
$$ 
It easy to see that $\sum _{n=1}^{\infty}z_{n}$ converges to $0$. 

Let $\varepsilon=1$ and $N\in\mathbb{N}$, $\delta>0$.
One can find $k\in\mathbb{N}$, such that $n_{k-1}>N$. Then by Lemma \ref{LemmaNotRP} for $(y_{1}^{(k)},\ldots,y_{2^{k+1}}^{(k)})\subseteq(z_i)_{i\geq N}$ and every permutation $\sigma\in S_{2^{k+1}}$  we have
$$
\Vert\frac{1}{2^{k}}\cdot\sum _{i=1}^{2^{k}} y_{\sigma(i)}^{(k)}\Vert_{\sup}=
\Vert\frac{1}{2^{k}}\cdot\sum _{i=1}^{2^{k}} x_{\sigma(i)}^{(k)}\Vert_{\sup}\geq\frac{1}{2^{k}}\cdot 2^{k}=1=\varepsilon.
$$
Moreover $\Vert\sum_{i=1}^{2^{k+1}}y_i^{(k)}\Vert=0<\delta$. This proves that the series $\sum _{n=1}^{\infty}z_{n}$ does not have (RP). 

Since the projection of the series on each coordinate contains only finitely many nonzero terms and a finite sum does not change under rearrangements, then $\on{SR}(\sum_{n=1}^\infty z_n)=\{\theta\}$. 
\end{proof}

\textbf{Acknowledgment.} We would like to thank Filip Strobin for a very careful analysis of this article, shortening the proof of Lemma \ref{LemmaComponentsInBall} and bringing us Lemma \ref{ClosureIn1PointCompactification}. We would also like to thank Robert Stegli\'nski for fruitful discussions on our paper.

\end{document}